\newtheorem{thm}{Theorem}[section]
\newtheorem*{thm*}{Theorem}
\newtheorem{lm}[thm]{Lemma}
\newtheorem{prop}[thm]{Proposition}
\newtheorem{cor}[thm]{Corollary}
\theoremstyle{definition}
\theoremstyle{definition}\newtheorem{ex}[thm]{Example}
\theoremstyle{remark}\newtheorem{rem}[thm]{Remark}
\DeclareMathOperator{\spec}{Spec}
\DeclareMathOperator{\proj}{Proj}
\DeclareMathOperator{\Proj}{Proj}
\DeclareMathOperator{\Hilb}{Hilb}
\DeclareMathOperator{\Ann}{Ann}
\newcommand{\ra}{\rightarrow}
\newcommand{\func}[3]{\ensuremath{#1\mathpunct: #2\ra #3}}
\newcommand{\injfunc}[3]{\ensuremath{#1\mathpunct: #2\hookrightarrow #3}}
\newcommand{\iso}[3]{\ensuremath{#1\mathpunct:
    #2\stackrel{_\sim}{\longrightarrow} #3}}
\newcommand{\CM}{\ensuremath{\mathit{CM}} }
\newcommand{\opn}[1]{\operatorname{#1}}
\newcommand{\calI}{\ensuremath{{\mathcal{I}}}}
\newcommand{\calK}{\ensuremath{{\mathcal{K}}}}
\newcommand{\calN}{\ensuremath{{\mathcal{N}}}}
\newcommand{\calO}{\ensuremath{{\mathcal{O}}}}
\newcommand{\calX}{\ensuremath{{\mathcal{X}}}}
\newcommand{\p}{\ensuremath{\mathfrak{p}}}
\newcommand{\m}{\ensuremath{\mathfrak{m}}}
\newcommand{\bbN}{\ensuremath{{\mathbb{N}}}}
\newcommand{\bbP}{\ensuremath{{\mathbb{P}}}}
\newcommand{\bbZ}{\ensuremath{{\mathbb{Z}}}}
\newcommand{\e}{\varepsilon}
\begin{document}
\title{The Cohen--Macaulay space of 
  twisted cubics} \author{Katharina Heinrich} \address{KTH Royal
  Institute of Technology, Institutionen f\"or matematik, 10044
  Stockholm, Sweden} \email{kchal@math.kth.se}
\subjclass[2010]{14H10, 14H50, 14C05}

\begin{abstract}
  In this work, we describe the Cohen--Macaulay space $\CM$ of twisted
  cubics parameterizing curves $C$ together with a finite map $\func i
  C \bbP^3$ that is generically a closed immersion and such that $C$
  has Hilbert polynomial $p(t)=3t+1$ with respect to $i$. We show that
  $\CM$ is irreducible, smooth and birational to one component of the
  Hilbert scheme of twisted cubics.
\end{abstract}

\maketitle
\thispagestyle{empty}

\section{Introduction}
 
A {\em twisted cubic} is a smooth, rational curve in $\bbP^3$ of
degree $3$ and genus $0$. It is projectively equivalent to the image
of the Veronese map $\bbP^1\ra \bbP^3$ mapping a point $[u:v]$ on the
line to the point in $\bbP^3$ with coordinates
$[u^3:u^2v:uv^2:v^3]$. Being the simplest example of a space curve,
these curves have been the object of interest in many problems in
algebraic geometry. Here we compare two modular compactifications of
the space $\calX$ of twisted cubics.

The first, and classical, modular compactification is given by the
{\em Hilbert scheme} $\Hilb_{\bbP^3}^{3t+1}$ parameterizing all closed
subschemes in $\bbP^3$ having Hilbert polynomial $p(t)=3t+1$. Piene
and Schlessinger gave in \cite{PS:twistedcubic} a detailed description of
$\Hilb_{\bbP^3}^{3t+1}$. It has two smooth irreducible components
$H_0$ and $H_1$ with generic points corresponding to a twisted cubic
and a smooth plane curve with an additional isolated point,
respectively. The component $H_0$ actually contains all curves in
$\Hilb_{\bbP^3}^{3t+1}$ that do not have an embedded or isolated
point, and especially all twisted cubics. Being a significantly
smaller compactification of $\calX$ than the whole Hilbert scheme
$\Hilb_{\bbP^3}^{3t+1}$, the component $H_0$ itself is of particular
interest. Ellingsrud, Piene and Str\o mme described it in
\cite{EPS:nets_quadrics} as the blow-up of the variety parameterizing
nets of quadrics along a point-plane incidence relation. However,
$H_0$ does not have any known modular interpretation, that is, it does
not satisfy the universal property of a moduli space.

The {\em space of Cohen--Macaulay curves} that H\o nsen introduced 
in \cite{Honsen} gives a different modular compactification $\CM$ of
$\calX$. Instead of adding degenerate schemes as in the
Hilbert scheme case, one considers only curves, that is,
one-dimensional schemes without embedded or isolated points. However,
the curves need not be embedded into $\bbP^3$. Instead they come with a
finite map to $\bbP^3$ that is only generically a closed
immersion. Explicitly, the space $\CM$ parameterizes all pairs $(C,i)$,
where $C$ is a curve and $\func i C \bbP^3$ is a finite map that is an
isomorphism onto its image away from a finite number of closed points
and such that $C$ has Hilbert polynomial $p(t)=3t+1$ with respect to
$i$. The moduli functor $\CM$ is represented by a proper algebraic
space, see \cite{Honsen} and \cite{CM}.

In this work, we describe the points of $\CM$. It turns out that only
two cases can occur. Either the map $i$ is a closed immersion or its
scheme-theoretic image $i(C)$ is a singular plane curve, and $i$
induces an isomorphism away from one singular point $p$ of
$i(C)$. Moreover, there is a bijection between the points of $\CM$ and
the component $H_0$ of the Hilbert scheme of twisted cubics such that
a pair $(C,i)$ where $i$ is not a closed immersion corresponds to the
plane image $i(C)$ augmented with an embedded point at $p$. This
bijection actually defines a birational map between the
spaces. Knowing the points of $\CM$, we can moreover show that the
space is smooth.

We believe that the space $\CM$ actually is isomorphic to the Hilbert
scheme component, giving a modular interpretation for
$H_0$. However, this will have to be shown in future work.

\subsection*{Acknowledgments} I thank Aise Johan de Jong for useful
conversations that partly took place during a visit to Columbia
University financed by SVeFUM, Stiftelsen f\"or Vetenskaplig Forskning
och Utbildning i Matematik.
 
\subsubsection*{Notation and conventions}
Throughout this paper, let $k$ be an algebraically closed field of
characteristic $\opn{char}(k)\neq 2,3$. Unless otherwise stated, the
projective space $\bbP^3$ has coordinates $x,y,z,w$. Moreover, we
write $k[\e]=k[t]/(t^2)$ for the Artin ring of dual numbers. All
schemes considered here are locally Noetherian.

\section{The space of Cohen--Macaulay curves}
For a polynomial $p(t)=at+b\in \bbZ[t]$, let
$\CM_{\bbP^n}^{p(t)}$ be the functor
$\func{\CM_{\bbP^n}^{p(t)}}{(\mathbf{Sch}/k)^\circ}{\mathbf{Sets}}$
that for every $k$-scheme $S$ parameterizes all equivalence classes of
pairs $(C,i)$, where $C$ is a flat scheme over $S$, and $\func i C
\bbP^n_S$ is a finite $S$-morphism such that for every $s\in S$ we
have that
\begin{enumerate}
\item the fiber $C_s$ is Cohen--Macaulay and of pure dimension $1$,
\item the map $\func{i_s}{C_s}{\bbP^n_{\kappa(s)}}$ is an isomorphism
  onto its image away from finitely many closed points,
\item the coherent sheaf $(i_s)_*\calO_{C_s}=(i_*\calO_C)_s$ on
  $\bbP^n_{\kappa(s)}$ has Hilbert polynomial $p(t)$.
\end{enumerate}
Two pairs $(C_1,i_1)$ and $(C_2,i_2)$ in $\CM(C)$ are equal if there exists
an isomorphism $\func \alpha {C_1}{C_2}$ such that $i_2\circ
\alpha=i_1$. 
\begin{thm}[{\cite{Honsen, CM}}]
  The functor $\CM_{\bbP^n}^{p(t)}$ is represented by a proper
  algebraic space.
\end{thm}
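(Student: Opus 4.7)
The plan is to realize $\CM_{\bbP^n}^{p(t)}$ as an open substack of the quotient of a Quot scheme by a linear group action, and then verify properness separately via the valuative criterion.

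Since $i$ is finite and hence affine, specifying $(C,i)$ over $S$ is equivalent to specifying a coherent sheaf of $\calO_{\bbP^n_S}$-algebras $\calA = i_*\calO_C$ with fibrewise Hilbert polynomial $p(t)$. Using a uniform Castelnuovo--Mumford bound — available because $p(t)=at+b$ is fixed and the fibres are one-dimensional Cohen--Macaulay — I would fix $m\gg 0$ so that every such $\calA$ is $m$-regular. Setting $\calE = \calO_{\bbP^n}(-m)^{p(m)}$, every object arises as a quotient $\calE\twoheadrightarrow\calA$ and is classified by a point of the projective Quot scheme $Q=\Quot^{p(t)}_{\bbP^n/k}(\calE)$. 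The algebra axioms (a unit $\calO_{\bbP^n}\to\calA$ together with a commutative, associative multiplication $\calA\otimes\calA\to\calA$) amount to the vanishing of prescribed sections of coherent sheaves in the universal family over $Q$; hence the pairs satisfying these axioms cut out a closed subscheme $Z\subset Q$.

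The group $G=GL_{p(m)}$ acts on $Z$ by changing the trivialization of $\calE$, and two points of $Z$ define isomorphic pairs $(C,i)$ precisely when they lie in the same $G$-orbit. The stabilizers are trivial: any automorphism $\alpha$ of $C$ with $i\circ\alpha=i$ restricts to the identity on the dense open subset where $i$ is a closed immersion, and hence is the identity by purity of $C$. Therefore $[Z/G]$ is an algebraic space. Inside it, condition (i) is open by semicontinuity of depth in a flat family, and condition (ii) is open by standard semicontinuity arguments (the locus where the cokernel of $\calO_{i_s(C_s)}\to(i_s)_*\calO_{C_s}$ has support of dimension $\ge 1$ is closed). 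Condition (iii) is built into the choice of Quot scheme. Intersecting these open loci with $[Z/G]$ yields $\CM_{\bbP^n}^{p(t)}$ as an open algebraic subspace.

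For properness I would apply the valuative criterion. Given a pair $(C_K,i_K)$ over the fraction field $K$ of a DVR $R$, the plan is to extend the scheme-theoretic image $Y_K=i_K(C_K)\subset\bbP^n_K$ to a flat $R$-family $Y\subset\bbP^n_R$ using properness of the Hilbert scheme, and then extend the $\calO_{Y_K}$-algebra $i_{K*}\calO_{C_K}$ to an $R$-flat coherent $\calO_Y$-algebra $\calA$; the required family is $(C,i)=(\mathbf{Spec}_Y\calA\to\bbP^n_R)$, with uniqueness coming from the fact that any two such algebras share the same $S_2$-saturation. The hard part is constructing $\calA$ so that the special fibre is Cohen--Macaulay of pure dimension one with Hilbert polynomial exactly $p(t)$: a naive flat limit of $i_{K*}\calO_{C_K}$ may acquire embedded or isolated points that violate purity and shift the Hilbert polynomial. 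Replacing the limit by its $S_2$-hull (the maximal Cohen--Macaulay saturation) eliminates these pathologies, but one must still verify that the ring structure survives the saturation and that the generic-immersion condition persists on the special fibre. This is the delicate analysis carried out in \cite{Honsen, CM}.
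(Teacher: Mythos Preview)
The paper does not prove this theorem: it is stated with attribution to \cite{Honsen, CM} and no argument is given. So there is nothing in the paper to compare your proposal against.

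That said, your outline is broadly in line with the strategy of the cited references: represent $\CM_{\bbP^n}^{p(t)}$ by constructing a Quot-type parameter space for the algebra sheaves $i_*\calO_C$, quotient by the automorphism group of the framing, use the rigidity result (no nontrivial automorphisms of $(C,i)$; cf.\ \cite[Theorem~2.19]{CM}, also invoked later in the present paper) to get an algebraic space rather than a stack, and establish properness via the valuative criterion. Your sketch is honest about where the real work lies---controlling the special fibre in the valuative-criterion step so that it remains pure Cohen--Macaulay with the correct Hilbert polynomial---and correctly defers that to the references. One small caution: the openness of condition~(ii) is slightly more delicate than a single semicontinuity statement, since ``generically a closed immersion on each fibre'' involves both the map $\calO_{i(C)}\to i_*\calO_C$ being an isomorphism over a dense open and that open meeting every irreducible component; but this is handled in \cite{CM}.
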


In the special case $n=3$ and $p(t)=3t+1$, we write $\mathit{CM}$
instead of $\mathit{CM}_{\bbP^3}^{3t+1}$.

\section{\texorpdfstring{The points of $\mathit{CM}$}{The points of CM}}
In this section, we classify the points $(C,i)$ in $\CM(\spec(k))$
according to the scheme-theoretic image $i(C)$. Moreover, we present
in Subsection~\ref{subsec:specializations} some specialization
relations between them.

\subsection{The scheme-theoretic image} 
We start by giving a description of the curves in $\bbP^3_k$ that can
occur as the scheme-theoretic image of a point $(C,i)\in
\CM(\spec(k))$.
\begin{prop}\label{prop:image_cases}
  Let $(C,i)$ be a $k$-rational point of $\mathit{CM}$. Then one of the
  following two cases occurs.
  \begin{enumerate}
  \item The morphism $i$ is a closed immersion, and the embedded curve
    corresponds to a point on the Hilbert scheme $\Hilb_{\bbP^3}^{3t+1}$ of
    twisted cubics.
  \item The scheme-theoretic image $i(C)$ is a plane curve of degree
    $3$, and $i$ induces an isomorphism onto the image away from one
    closed point in $i(C)$.
  \end{enumerate}
\end{prop}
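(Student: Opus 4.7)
My plan is to analyze the scheme-theoretic image $C' := i(C) \subseteq \bbP^3$, so that $i$ factors as $C \xrightarrow{\pi} C' \hookrightarrow \bbP^3$ with $\pi$ finite, surjective, and an isomorphism outside finitely many closed points. First I would show that $C'$ is Cohen--Macaulay of pure dimension~$1$: since finite morphisms preserve depth, $\pi_*\calO_C$ is a Cohen--Macaulay $\calO_{\bbP^3}$-module of dimension~$1$, and the injection $\calO_{C'} \hookrightarrow \pi_*\calO_C$ prevents $\calO_{C'}$ from acquiring an embedded or isolated component. Setting $\delta := \length(\pi_*\calO_C/\calO_{C'}) = \sum_p \delta_p$, where the sum runs over the points $p$ at which $\pi$ fails to be an isomorphism (each contributing $\delta_p \geq 1$), comparison of Hilbert polynomials gives $P_{C'}(t) = 3t + 1 - \delta$. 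Thus $C'$ has degree $3$ and arithmetic genus $\delta$; when $\delta = 0$ the morphism $i$ is a closed immersion and $C'$ defines a point of $\Hilb_{\bbP^3}^{3t+1}$, giving case~(i).

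The main work lies in the case $\delta \geq 1$, where I want to force $C'$ into a plane. Assume for contradiction that $C'$ is non-degenerate, and pick any $q \in \bbP^3 \setminus C'$. Projection from $q$ gives a finite morphism $\pr\colon C' \to \bbP^2$, whose scheme-theoretic image $D := \pr(C')$ is again Cohen--Macaulay of pure dimension~$1$ by the argument above. The image $D$ cannot have degree less than $3$ (else $C'$ would lie in the plane $\overline{qD} \subset \bbP^3$, contradicting non-degeneracy), so $D$ is a plane cubic with $p_a(D) = 1$ and $\pr|_{C'}$ is birational. Since $\pr^*\calO_{\bbP^2}(1) \cong \calO_{\bbP^3}(1)|_{C'} = \calO_{C'}(1)$, if $\pr|_{C'}$ were an isomorphism onto $D$ the projection formula would give $h^0(\calO_{C'}(1)) = h^0(\calO_D(1)) = 3$, contradicting the bound $h^0(\calO_{C'}(1)) \geq 4$ forced by non-degeneracy. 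Hence $\pr|_{C'}$ is not an isomorphism, so $\length(\pr_*\calO_{C'}/\calO_D) \geq 1$ and $p_a(C') \leq p_a(D) - 1 = 0$, contradicting $p_a(C') = \delta \geq 1$.

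Therefore $C'$ lies in a plane and is a plane cubic of arithmetic genus~$1$, forcing $\delta = 1$; since every non-isomorphism point of $\pi$ contributes at least $1$, there is exactly one such point, giving case~(ii). The main obstacle is the projection argument in the second paragraph: finding a clean contradiction to non-degeneracy. The key observation that makes it work is that pushing forward $\calO_{C'}(1)$ along a hypothetical isomorphism to a plane cubic would cap the global sections at $3$, whereas non-degeneracy in $\bbP^3$ forces at least $4$.
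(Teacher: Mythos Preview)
Your overall strategy is sound and genuinely different from the paper's. The paper disposes of the case $\delta\geq 1$ in one line by citing the Castelnuovo-type bound for space curves \cite[Theorem~3.1]{Hartshorne:genus}: a degree-$d$ curve in $\bbP^3$ has arithmetic genus at most $\tfrac{1}{2}(d-1)(d-2)$, with equality only for plane curves. You instead reprove this special case by hand via projection, which is more self-contained but longer. Your observation that $\calO_{C'}\hookrightarrow i_*\calO_C$ forces $C'$ to be Cohen--Macaulay is a nice point that the paper uses implicitly (it is needed for the cited genus bound to apply) but never states.

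There is, however, a real gap in the projection step. You write ``pick any $q\in\bbP^3\setminus C'$'' and then assert that the image $D$ has degree $3$ because otherwise ``$C'$ would lie in the plane $\overline{qD}$.'' That inference is only valid when $D$ has degree $1$: if $D$ is a conic, $\overline{qD}$ is a quadric cone, not a plane, and no contradiction to non-degeneracy follows. Concretely, take $C'$ to be two coplanar lines $L_1,L_2$ together with a third line $L_3$ not in their plane, and choose $q$ in the plane of $L_1,L_2$ (but off $C'$); then $L_1$ and $L_2$ project onto the same line and $\deg D=2$. So $\deg D=3$ can fail for special $q$.

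The fix is easy: choose $q$ generically, so that $\pr|_{C'}$ is an isomorphism onto its image away from finitely many points (equivalently, so that no $1$-parameter family of lines through $q$ meets $C'$ in length $\geq 2$). Then the cokernel of $\calO_D\hookrightarrow \pr_*\calO_{C'}$ has zero-dimensional support, whence $\deg D=\deg C'=3$, and the rest of your argument (the $h^0$ count versus the genus drop) goes through verbatim. Once you replace ``any $q$'' by ``generic $q$'' and justify birationality of the projection, your proof is complete.
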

\begin{proof}
  The finite morphism $i$ factors through the scheme-theoretic image
  $i(C)\subset \bbP^3_k$, and we have an induced short exact
  sequence \[\xymatrix{0\ar[r] & \calO_{i(C)}\ar[r]&i_*\calO_C \ar[r]
    &\calK \ar[r] &0 }\] of coherent $\calO_{\bbP^3_k}$-modules, where
  the cokernel $\calK$ is supported on the finitely many closed points
  where $i(C)$ is not isomorphic to $C$. The Hilbert polynomial
  $p_\calK(t)$ of $\calK$ is constant, equal to a nonnegative integer
  $l$, and we
  have \[p_{i(C)}(t)=p_{i_*\calO_C}(t)-p_{\calK}(t)=3t+1-l.\] In
  particular, we see that $i(C)\subset \bbP^3_k$ is a curve of degree
  $d=3$. Hence, by \cite[Theorem 3.1]{Hartshorne:genus}, its
  arithmetic genus $g_{i(C)}$ is bounded from above by $g_{i(C)}\leq
  \frac 1 2 (d-1)(d-2)=1$. As also $g_{i(C)}=l\geq 0$, it follows that
  there are only two possibilities, namely $l=0$ and $l=1$.

  Suppose first that $l=0$. Then $\calK=0$ and $i$ induces an
  isomorphism between $C$ and $i(C)$, that is, the map $i$ is a
  closed immersion.

  If $l=1$, then the scheme-theoretic image $i(C)$ is a curve of
  degree $d=3$ and genus $g_{i(C)}=1=\frac 1 2 (d-1)(d-2)$. Again by
  \cite[Theorem~3.1]{Hartshorne:genus}, it follows that the curve
  $i(C)$ lies in a plane and does not have any embedded or isolated
  points. Moreover, $p_\calK(t)=1$ implies that the non-isomorphism
  locus consists of a single point in $i(C)$.
\end{proof}

Furthermore, we can show that the non-isomorphism locus is contained
in the singular locus of the scheme-theoretic image $i(C)$.

\begin{lm}\label{lm:normalization}
  Let $(C,i)\in \mathit{CM}_{\bbP^n}^{at+b}(\spec(k))$ with
  scheme-theoretic image $i(C)$, and let $U\subseteq i(C)$ be an
  reduced open subscheme. Then the normalization $\func \nu
  {\widetilde{U}}{U}$ factors through the restriction $\func {i_U}
  {i^{-1}(U)} U$.
\end{lm}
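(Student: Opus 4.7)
The plan is to exhibit $\func{i_U}{i^{-1}(U)}{U}$ as a finite, schematically dominant morphism of reduced schemes that is an isomorphism over an open dense subset of $U$, and then invoke the following local algebra fact: if $A \hookrightarrow B$ is a finite injection of reduced Noetherian rings inducing an isomorphism of total rings of fractions, then $B$ lies inside the integral closure $\widetilde{A}$ of $A$ in $Q(A)$. Affine-locally this exhibits a factorization $A \hookrightarrow B \hookrightarrow \widetilde{A}$, and these pieces should glue to a morphism $\widetilde{U}\to i^{-1}(U)$ over $U$ composing with $i_U$ to $\nu$.

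The first step is to check that $i^{-1}(U)$ is reduced. Since $C$ is Cohen--Macaulay of pure dimension one it has no embedded associated points, and this property is inherited by the open subscheme $i^{-1}(U)$. Because $i$ is an isomorphism onto its image away from a finite set $F$ of closed points, $i_U$ restricts to an isomorphism from the open dense subscheme $i^{-1}(U)\setminus F$ onto the open dense $U\setminus i(F)$; as $U$ is reduced, $i^{-1}(U)$ is generically reduced. A locally Noetherian scheme with no embedded points that is generically reduced is itself reduced, which yields the claim.

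The second step verifies that $i_U$ is schematically dominant and a generic isomorphism. By definition $i(C)$ is the scheme-theoretic image of the finite morphism $i$, and formation of the scheme-theoretic image of a finite (affine) morphism commutes with restriction to opens of the target; hence the scheme-theoretic image of $i_U$ equals $U$. Affine-locally, for $\spec(A)\subseteq U$ with $i_U^{-1}(\spec(A))=\spec(B)$, this says $A\hookrightarrow B$ with $B$ finite over $A$; and the generic isomorphism from the first step upgrades this to $Q(A) \cong Q(B)$.

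Applying the local algebra fact recalled at the outset, the reduced ring $B$ embeds into $Q(B)=Q(A)$ as a finite $A$-subalgebra that is integral over $A$, hence $B\subseteq\widetilde{A}$. The resulting factorizations $A\hookrightarrow B \hookrightarrow \widetilde{A}$ are compatible with localization in $A$ and therefore glue to the desired morphism $\widetilde{U}\to i^{-1}(U)$ over $U$. The one real subtlety, and the step in which the Cohen--Macaulay hypothesis enters essentially, is the reducedness of $i^{-1}(U)$; without it, embedded components of $C$ sitting over the non-isomorphism locus of $i$ could obstruct the factorization.
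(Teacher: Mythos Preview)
Your proof is correct and follows essentially the same route as the paper, which simply observes that $i_U$ is integral and birational and then cites Stacks Project Tag~035Q for the factorization. Your argument unpacks that citation into the underlying commutative algebra and, usefully, makes explicit where the Cohen--Macaulay hypothesis enters (the reducedness of $i^{-1}(U)$, needed for the identification $Q(A)\cong Q(B)$), a point the paper leaves implicit.
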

\begin{proof}
 Observe that the morphism $i_U$ is integral and
  birational. Then the statement is a special case of \cite[Tag
  035Q]{stacks-project}.
\end{proof}
\begin{prop}\label{prop:non-iso_sing}
  Let $(C,i)\in \mathit{CM}_{\bbP^n}^{at+b}(\spec(k))$. Then the
  zero-di\-men\-sional locus $Y\subset i(C)$ where $C$ and $i(C)$ are
  not isomorphic is contained in the singular locus of $i(C)$. In
  particular, if the scheme-theoretic image $i(C)$ is smooth, then $i$
  is a closed immersion.
\end{prop}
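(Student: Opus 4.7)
The plan is to fix a point $p\in i(C)$ where $i(C)$ is smooth and show that $i$ is an isomorphism in a neighbourhood of $p$; this immediately gives $Y\subset \mathrm{Sing}(i(C))$, and the ``in particular'' statement follows.

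First I would choose an open neighbourhood $U\subseteq i(C)$ of $p$ that lies entirely inside the smooth locus of $i(C)$. Since $i(C)$ is one-dimensional, smoothness implies that $U$ is regular, hence normal, and in particular reduced. Lemma~\ref{lm:normalization} then applies and yields a factorization $\nu=i_U\circ\mu$ of the normalization $\nu\colon \widetilde U\to U$ through $i_U\colon i^{-1}(U)\to U$. Because $U$ is normal and one-dimensional, the normalization $\nu$ is an isomorphism, so after identifying $\widetilde U$ with $U$ we obtain a morphism $\mu\colon U\to i^{-1}(U)$ with $i_U\circ\mu=\mathrm{id}_U$. Thus $\mu$ is a section of the finite morphism $i_U$, and in particular it is a closed immersion.

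The last step is to upgrade this closed immersion to an isomorphism; this is the main point of the argument. The closed immersion $\mu$ corresponds to a surjection $\calO_{i^{-1}(U)}\surj \mu_*\calO_U$ with some ideal sheaf $\calI$. Because $i_U$ is birational and $\mu$ is a section, $i_U$ restricts to an isomorphism on a dense open of $i^{-1}(U)$ along which $\mu$ is also an isomorphism, so $\calI$ vanishes generically and is therefore supported on a zero-dimensional subset. Now $i^{-1}(U)$ is open in $C$ and hence is Cohen--Macaulay of pure dimension one, so every associated point of $\calO_{i^{-1}(U)}$ is a generic point of a one-dimensional component. Any associated point of the subsheaf $\calI\subset \calO_{i^{-1}(U)}$ must be such an associated point, which is incompatible with $\calI$ having zero-dimensional support unless $\calI=0$. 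Hence $\mu$ is an isomorphism, and therefore so is its inverse $i_U$.

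This shows that $i$ is an isomorphism over the smooth locus of $i(C)$, so $Y$ is contained in the singular locus. If $i(C)$ is smooth everywhere, then $Y=\varnothing$, which combined with the finiteness of $i$ and with $i$ being a bijection onto $i(C)$ at the level of underlying sets means that $i$ is a closed immersion. The step I expect to require the most care is the passage from ``$\mu$ is a closed immersion'' to ``$\mu$ is an isomorphism'': this is exactly the place where one needs to use that $C$ has no embedded or isolated points, rather than just working with the reduced structure.
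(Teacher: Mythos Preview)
Your argument is correct and follows the same strategy as the paper: restrict to the regular locus of $i(C)$, invoke Lemma~\ref{lm:normalization}, use that the normalization is trivial there, and deduce that $i_U$ is an isomorphism. The paper carries this out affine-locally, writing the factorization as a chain of ring maps $A\hookrightarrow B\hookrightarrow A$ and concluding $A=B$; your version is a sheaf-theoretic unpacking of the same idea. The one place where you are more explicit is precisely the step you flagged: the paper simply asserts that $B\hookrightarrow A$ is injective, whereas you justify the analogous statement (that the ideal sheaf of the section $\mu$ vanishes) via the Cohen--Macaulay hypothesis on $C$. That is indeed the substantive point, so your caution there is well placed.
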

\begin{proof}
  Let $U=\spec(A)\subset i(C)$ be an open affine subscheme contained
  in the regular locus of $i(C)$, and let $i^{-1}(U)=\spec(B)$. As
  $\widetilde U=U$, the factorization of Lemma~\ref{lm:normalization}
  induces a sequence of injective maps $A\hookrightarrow
  B\hookrightarrow A$. It follows that $i$ induces an isomorphism
  between $U$ and $i^{-1}(U)$, and the non-isomorphism locus $Y$ is
  contained in the singular locus of $i(C)$.

  In particular, the locus $Y$ is empty if $i(C)$ is smooth, that is,
  $i$ is a closed immersion.
\end{proof}

This allows us to give a complete list of the possibilities for the points of
the Cohen--Macaulay space of twisted cubics $\CM$. 

\begin{prop}\label{prop:image_list}
  Let $(C,i)\in \mathit{CM}(\spec(k))$ be such that the map $i$ is not
  a closed immersion. Then the scheme-theoretic image $i(C)$ is a
  plane curve of degree $3$ and $i$ induces an isomorphism between $C$
  and $i(C)$ away from one singular point $p\in i(C)$. Moreover,
  $i(C)$ and $p$ have to be as in one of the following cases:
  \begin{enumerate}
    [label=\normalfont{(\Roman*)}]
  \item\label{item:node} a plane nodal curve, and $p$ is the singular
    point,
  \item\label{item:cusp} a plane cuspidal curve, and $p$ is the
    singular point,
  \item\label{item:quadr+line} a plane conic intersecting a line
    twice, and $p$ is one of the intersection points,
  \item\label{item:quadr+tanget} a plane conic with a tangent line
    through $p$ that lies in its plane,
  \item\label{item:triangle} three coplanar lines
    with three different points of pairwise intersection, and $p$ is
    one of these intersection points,
  \item\label{item:star} three coplanar lines with
    one common point of intersection $p$,
  \item\label{item:double+line1} a plane double line meeting a line in
    its plane, and $p$ is a point on the double line other than the
    intersection point,
  \item\label{item:double+line2} a double line meeting a line as in
    \ref{item:double+line1}, and $p$ is the point of intersection,
  \item\label{item:triple} a planar triple line, and $p$ is any point
    on it.
\end{enumerate}
\end{prop}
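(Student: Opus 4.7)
The plan is first to enumerate all possibilities for the scheme-theoretic image $i(C)$, and then to determine for each such image which singular points $p$ can serve as the non-isomorphism locus. By Proposition \ref{prop:image_cases} the curve $i(C)$ is a plane cubic of arithmetic genus $1$ with no embedded or isolated points, and the cokernel sheaf $\calK$ has Hilbert polynomial equal to $1$, so it has length $1$ and is supported at $p$; by Proposition \ref{prop:non-iso_sing} the point $p$ is a singular point of $i(C)$.

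To classify the possible $i(C)$, I would choose a plane in $\bbP^3$ containing $i(C)$ and write $i(C) = V(F)$ for a cubic form $F$ on that plane. Since $i(C)$ is a hypersurface in the plane, it is automatically Cohen--Macaulay; the requirement of being pure-dimensional translates to a condition on the factorization of $F$. Factoring $F$ into irreducibles and discarding the smooth irreducible case (which yields a closed immersion) leaves exactly the nine configurations \ref{item:node}--\ref{item:triple}: $F$ irreducible with a node or a cusp; $F = Q \cdot L$ with $Q$ a smooth conic and $L$ a line meeting $Q$ either transversely or tangentially; $F$ a product of three distinct lines forming either a triangle or concurrent through a common point; $F = L_1^2 L_2$ with $L_1 \neq L_2$; and $F = L^3$.

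With $i(C)$ fixed, I would next determine which singular points can serve as $p$. In the reduced cases \ref{item:node}--\ref{item:star}, Lemma \ref{lm:normalization} applies: on any reduced affine open $U \subset i(C)$, the ring $\Gamma(i^{-1}(U), \calO_C)$ sits between $\calO_{i(C)}(U)$ and the normalization, and the cokernel at $p$ has length exactly $1$. For the unique singular point of the nodal cubic (I), of the cuspidal cubic (II), and of the ordinary triple point (VI), the location of $p$ is therefore forced. For (III) and (V), which have respectively two and three ordinary nodes, each of $\delta$-invariant $1$, the length-$1$ bound on $\calK$ means that $p$ can be any single one of the nodes but no more than one. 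For the tacnode (IV), with $\delta$-invariant $2$, an intermediate subring of the normalization of cokernel length $1$ over the local ring does exist, realizing $p$ as the tacnode.

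The main obstacle is the non-reduced cases \ref{item:double+line1}--\ref{item:triple}, where Lemma \ref{lm:normalization} fails along the non-reduced components. Here I would work directly with the completed local ring at a candidate point $p$ and exhibit an explicit finite ring extension $B \supset \widehat{\calO}_{i(C),p}$ of cokernel length $1$ with $\spec B$ Cohen--Macaulay and pure of dimension $1$. For instance, at a generic point $p$ of the double line in (VII) one has $\widehat{\calO}_{i(C),p} \cong k[[y]][x]/(x^2)$ and can take $B = k[[y]][f]/(f^2)$ with $x = yf$, and analogous constructions cover (VIII) at the intersection point and (IX) at any point on the triple line. The delicate step is the converse: showing that at any point other than the listed locations, the purity requirement on $C$ together with the length-$1$ bound on $\calK$ forces $B = \widehat{\calO}_{i(C),p}$, so $i$ must already be an isomorphism there. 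This pins down $p$ to the configurations in the statement.
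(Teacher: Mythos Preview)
Your first two paragraphs match the paper's proof exactly, and in fact that is all the proposition requires. The paper's own argument is three sentences: Proposition~\ref{prop:image_cases} shows that $i(C)$ is a plane cubic and the non-isomorphism locus is a single closed point $p$; Proposition~\ref{prop:non-iso_sing} shows that $p$ is a singular point of $i(C)$; and the list \ref{item:node}--\ref{item:triple} is then simply the complete enumeration, up to projective equivalence, of pairs consisting of a singular plane cubic together with a choice of singular point on it. No further constraint on $p$ is claimed here, and no existence is asserted.

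Your third and fourth paragraphs therefore address questions that belong to later results. The explicit ring extensions you construct (for the tacnode and for the non-reduced cases) establish \emph{existence} of a pair $(C,i)$ realizing each configuration --- this is the content of Theorem~\ref{thm:existence}. The normalization and length considerations you sketch point toward \emph{uniqueness}, which is Theorem~\ref{thm:uniqueness}. Neither is part of the present proposition.

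In particular, the ``delicate converse step'' you flag in the non-reduced cases is vacuous once you observe what the singular locus actually is. For a curve of the form $L_1^2 L_2$ the singular locus is precisely the line $L_1$, and cases \ref{item:double+line1} and \ref{item:double+line2} together already cover every point of $L_1$; for $L^3$ the singular locus is all of $L$, and case \ref{item:triple} allows $p$ to be any point on it. So once Proposition~\ref{prop:non-iso_sing} tells you that $p$ is singular, there is nothing further to rule out.
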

The curves listed above are displayed in Figure~\ref{fig:list}.
\begin{proof}
  We showed in Proposition~\ref{prop:image_cases} that $i(C)$ is a
  plane curve of degree $3$ and that the non-isomorphism locus is one
  closed point $p$ in $i(C)$. Moreover, it follows from
  Proposition~\ref{prop:non-iso_sing} that $p$ is a singular point.

  The list consists of all types, up to projective equivalence, of
  singular plane curves of degree $3$ and the possibilities of
  choosing a singular point on it.
\end{proof}

\begin{figure}[h]
    \centering
    \includegraphics[width=0.85\textwidth]{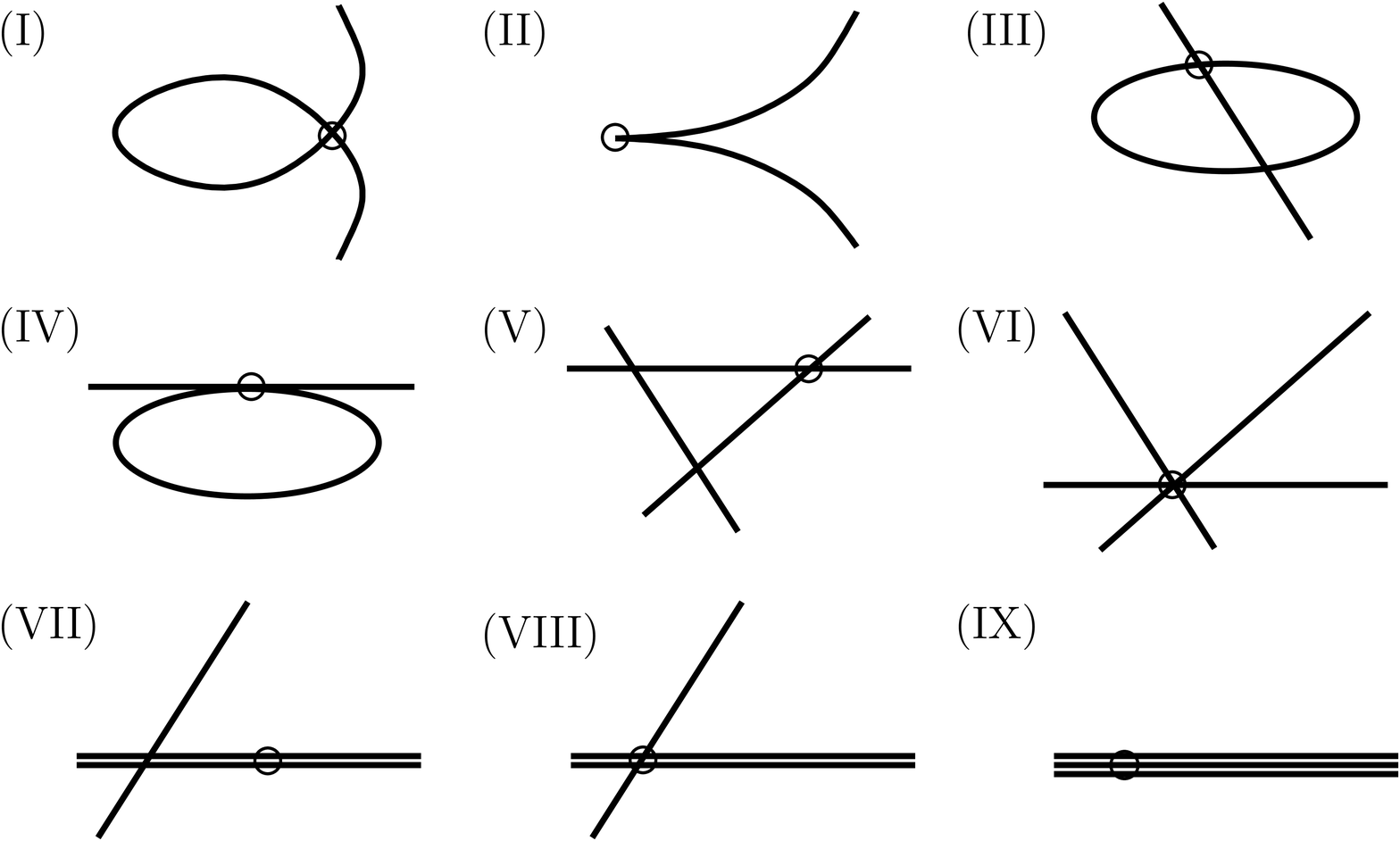}
    \caption{The possible scheme-theoretic images and the
      non-isomorphism point.}
    \label{fig:list}
  \end{figure}
\subsection{Existence}
All the curves listed above actually occur as scheme-theoretic images,
that is, for every choice of plane curve $D$ and singular point
$p$ as in Proposition~\ref{prop:image_list}, there exists at least one
point $(C,i)$ in $\CM(\spec(k))$ such that $i(C)=D$ and $p$ is the
non-isomor\-phism locus.

\begin{thm}\label{thm:existence}
  For every plane cubic $D\subset \bbP^3_k$ of degree $3$ with
  singular point $p\in D$, there exists $(C,i)\in
  \CM(\spec(k))$ with the following properties:
  \begin{enumerate}
  \item The scheme-theoretic image of $C$ in $\bbP^3_k$ is $D$, and the
    induced map $C\to D$ is an isomorphism away from $p$.
  \item The curve $C$ is the flat degeneration of a twisted cubic, and
    it has an embedding $h\mathpunct : C\subset \bbP^3_k$ such that
    $i^*\calO_{\bbP^3_k}(1)=h^*\calO_{\bbP^3_k}(1)$.
  \end{enumerate}
\end{thm}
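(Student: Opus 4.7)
The plan is to handle each of the nine configurations $(D,p)$ of Proposition~\ref{prop:image_list} by explicit construction. For every case I would first build the abstract curve $C$ together with the morphism $i\colon C\to\bbP^3_k$ having image $D$, and then identify $C$ with a closed subscheme of $\bbP^3_k$ known to lie on Piene and Schlessinger's component $H_0$, thereby obtaining both the embedding $h$ and the flat degeneration from a twisted cubic.

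To build $C$ locally at $p$, I would choose a finite ring extension $\calO_{D,p}\subset R$ of length $1$ inside the total ring of fractions of $\calO_{D,p}$, with $R$ one-dimensional and Cohen--Macaulay. In the reduced cases (I)--(VI) this $R$ is a partial normalisation of $\calO_{D,p}$ at $p$, which exists because the $\delta$-invariant is at least $1$ at every singular point. In the non-reduced cases (VII)--(IX), whose local rings have the form $k[[t,u]]/(u^2)$, $k[[u,v]]/(u^2v)$ or $k[[t,u]]/(u^3)$, I would adjoin to $\calO_{D,p}$ a rational element such as $u/t$ or $u^2/t$ in the total ring of fractions; a direct computation shows that the resulting ring is Cohen--Macaulay and yields a length-$1$ extension. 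Gluing $\spec R$ to $D\setminus\{p\}$ gives $C$ with a natural finite morphism $i\colon C\to D\hookrightarrow\bbP^3_k$. The Hilbert polynomial of $i_*\calO_C$ is then $p_D(t)+1=3t+1$, and the remaining conditions of Section~2 are immediate.

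For the embedding $h$ and the flat degeneration, I would identify the abstract curve $C$ obtained above with a specific closed subscheme of $\bbP^3_k$ known to sit on $H_0$: a smooth twisted cubic in cases (I)--(II); a conic nodally glued to a non-coplanar chord in (III)--(IV); a non-planar triangle or three-pointed star in (V)--(VI); and non-planar Cohen--Macaulay thickenings of lines in (VII)--(IX). Each such subscheme is by Piene--Schlessinger a flat limit of twisted cubics, producing the required one-parameter family, while its inclusion into $\bbP^3_k$ provides $h$. Both pullback line bundles $i^*\calO_{\bbP^3}(1)$ and $h^*\calO_{\bbP^3}(1)$ are degree-$3$ invertible sheaves on $C$; in each case one checks directly that they have the same restriction to every (possibly thickened) irreducible component, forcing them to coincide.

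The most delicate part will be cases (VII), (VIII), and (IX), where $C$ is non-reduced and ``partial normalisation'' has to be reinterpreted through the total ring of fractions. There one must verify by hand that the candidate ring $R$ is Cohen--Macaulay, exhibit a non-planar closed embedding of the resulting abstract scheme into $\bbP^3_k$ lying in $H_0$, and compare the line bundles on the thickened line components on both sides. Tracking the invertible sheaf through these identifications for the planar triple line (IX), and producing a deformation to a smooth twisted cubic for it, is where the calculation will be most intricate.
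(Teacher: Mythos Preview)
Your plan is workable but takes a noticeably different route from the paper's, and one step is under-justified.

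The paper does not construct $C$ abstractly and then search for an embedding. Instead, for each type it writes down $C$ directly as a closed subscheme $C=\Proj(k[x,y,w,u]/I)\subset\bbP^3_k$ and defines $i$ by the graded ring homomorphism $\varphi\colon k[x,y,z,w]\to k[x,y,w,u]/I$ sending $x,y,w$ to themselves and $z$ to $0$. In other words, $i$ is simply the linear projection of the embedded curve $C$ onto the plane $z=0$. With this set-up the embedding $h$ is the inclusion $C\subset\Proj(k[x,y,w,u])$, and the equality $i^*\calO_{\bbP^3_k}(1)=h^*\calO_{\bbP^3_k}(1)=\calO_C(1)$ is automatic, since both maps are induced by degree-one graded homomorphisms into the same graded ring. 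The nine ideals $I$ are exhibited explicitly, and the flat degeneration to a twisted cubic is read off from the shape of $I$ (the accompanying remark even explains how the ideals were found, by clearing powers of $t$ in a one-parameter family on $\bbP^3_{k[t]}$). So the paper never needs to compare two a priori unrelated line bundles, nor to identify an abstract gluing with an embedded curve.

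Your approach trades these free facts for two verification steps. First, you must produce, in each case, an explicit isomorphism between your glued scheme and a specific closed subscheme of $\bbP^3_k$ lying on $H_0$; this is doable but is real work in the non-reduced cases. Second, and more seriously, your line-bundle argument ``same restriction to every (possibly thickened) irreducible component, forcing them to coincide'' is not sufficient as stated: in case \ref{item:triple} the curve $C$ is irreducible, so the claim is circular, and in general equal multidegree does not by itself force isomorphism. What makes it true here is that $C$ has arithmetic genus $0$, hence $H^1(C,\calO_C)=0$ and $\opn{Pic}^0(C)$ is trivial, so $\opn{Pic}(C)$ injects into the multidegree lattice. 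You should invoke this explicitly; once you do, the comparison goes through. The payoff of your route is a more conceptual explanation of where $C$ comes from (partial normalisation in the total ring of fractions), while the paper's explicit projective construction buys brevity and makes both the embedding $h$ and the line bundle identity come for free.
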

\begin{proof}
  Without loss of generality, we can assume that the curve $D$ is
  contained in the plane $z=0$ and that it is given by a cubic form
  $q(x,y,w)$ with a singularity at the point $p=[0 : 0:0:1]$. For
  every type of curve as in Proposition~\ref{prop:image_list}, it
  suffices to consider one particular example for $q(x,y,w)$ as they
  all are projectively equivalent.

  In all cases, the curve is given as $C=\Proj(k[x,y,w,u]/I)$ for some
  ideal $I$, and the morphism $\func i C \bbP^3_k$ is induced by the
  homomorphism of graded rings
  $\func{\varphi}{k[x,y,z,w]}{k[x,y,w,u]/I}$ with $\varphi(x)=x$,
  \mbox{$\varphi(y)=y$}, $\varphi(z)=0$ and $\varphi(w)=w$.
  \begin{enumerate}[label={\normalfont(\Roman*)}]
  \item With $I=(xu-yw,yu-x(x+w), u^2-w(x+w))$, the curve $C$ is a
    twisted cubic, and the scheme-theoretic image $i(C)$ is the plane
    nodal curve defined by the ideal
    $\ker(\varphi)=(z,x^3+x^2w-y^2w)$. Note moreover that $i$ is an
    isomorphism onto the image away from the node $p=[0:0:0:1]$.
  \item Let $I=(xu-yw, yu-x^2,u^2-xw)$. Then $C$ is a twisted cubic,
    and the scheme-theoretic image is the plane cuspidal curve defined
    by the ideal $\ker(\varphi)=(z,x^3-y^2w)$.
  \item The conic intersecting a line not in its plane having the ideal
    $I=(xu,yu-(x^2+yw),u^2-uw)$, has scheme-theoretic image given by
    $\ker(\varphi)=(z,x^3+xyw)$, that is, a conic meeting a line in
    two points, one of them being the non-isomorphism point
    $p=[0:0:0:1]$.
  \item With $I=(xu-(x^2+yw), yu, u^2-(x^2+yw))$, the curve $C$ is a
    conic intersecting a line that does not lie in its plane, and the
    image is the conic with tangent line, given by the ideal
    $\ker(\varphi)=(z,x^2y+y^2w)$.
  \item For $I=(xu,yu-yw,u^2-uw)$, the curve $C$ consists of three
    noncoplanar lines with two intersection points. The
    scheme-theoretic image, given by the ideal $\ker(\varphi)=(z,xyw)$,
    is three coplanar lines such that two of them intersect in the
    non-isomorphism locus $p=[0:0:0:1]$.
  \item With $I=(xu-xy,yu-xy,u^2-yu)$, we have that $C$ consists of
    three concurrent but not coplanar lines. The scheme-theoretic
    image is three concurrent and coplanar
    lines, and it is given by the ideal $\ker(z,x^2y-xy^2)$.
  \item For $I=(xu,yu-xw,u^2)$, the curve $C$ is a double line of
    genus $-1$ meeting a line. The image is a planar double line and a
    line in its plane, given by the ideal
    $\ker(\varphi)=(z,x^2w)$. The curves $C$ and $i(C)$ are isomorphic
    away from the point $p=[0:0:0:1]$ that lies on the double line but
    is not the intersection point.
  \item Similarly, the ideal $I=(xu-x^2,yu,u^2-xu)$ describing a
    planar double line and a line not in its plane gives as image the
    planar double line and the line in its plane defined by the ideal
    $\ker(\varphi)=(z,x^2y)$. In this case the non-isomorphism point
    is the intersection point $p=[0:0:0:1]$.
  \item Finally, if $C$ is the nonplanar triple line defined by the
    ideal $I=(xu,yu-x^2,u^2)$, the image is the planar triple line
    given by the ideal $(z,x^3)$.
  \end{enumerate}
  In all cases the curve $C$ is given with an embedding into the
  projective space $\bbP^3_k=\Proj(k[x,y,w,u])$ so that
  $i^*\calO_{\bbP^3_k}(1)=\calO_C(1)$. Moreover, every curve is the
  specialization of a twisted cubic.
\end{proof}
\vspace{\baselineskip}

\begin{rem}
   Let for example $D$ be the plane curve given by the ideal
  $(z,x^3+x^2w-y^2w)$ as in case ~\ref{item:node}. Then we consider
  the flat one-parameter family $Z\subset \bbP^3_{k[t]}$ generated by
  the homogeneous polynomials $f_1=xz-tyw$, $f_2=yz-tx(x+w)$,
  $f_3=z^2-t^2w(x+w)$ and $q=x^3+x^2w-y^2w$ in $k[t][x,y,z,w]$. Note
  that $yf_1-xf_2=tq$. For $t\neq 0$ the fiber $Z_t$ is a twisted
  cubic, whereas $Z_0$ is the plane nodal curve $D$ with an embedded
  point at the singularity given by the ideal $(xz,yz,z^2,q)$. Then
  the ideal $I$ of the curve $C$ is generated by the polynomials
  $g_1,g_2,g_3\in k[x,y,w,u]$ that are obtained by dividing $f_1$ and
  $f_2$ by $t$ and $f_3$ by $t^2$ and setting $u=t^{-1}z$.

  More generally, all curves and maps in the proof of
  Theorem~\ref{thm:existence} were constructed in a similar way: We
  consider a flat one-parameter family $Z\subset \bbP^3_{k[t]}$ such
  that the fiber $Z_t$ is a Cohen--Macaulay curve with Hilbert
  polynomial $p(n)=3n+1$ for $t\neq 0$, and $Z_0$ is the plane curve
  $D$ with an embedded point supported at $p$. Suitable generators
  $f_1, f_2, f_3,q\in k[t][x,y,z,w]$ of the ideal defining $Z$ give
  then rise to the generators $g_1, g_2,g_3\in k[x,y,w,u]$ of $I$.
\end{rem}

\subsection{Uniqueness}\label{subsec:uniqueness}
In the next step, we show that the curves constructed in the proof of
Theorem~\ref{thm:existence} are the unique solutions, see
Theorem~\ref{thm:uniqueness}.
\begin{lm}\label{lm:property_extension}
  Let $(C,i)\in \mathit{CM}(\spec(k))$ be such that the map $i$ is not
  a closed immersion. Assume that the scheme-theoretic image $i(C)$ is
  contained in the plane $z=0$ and that $i$ induces an isomorphism
  between the curve $C$ and the image away from the singular point
  $[0:0:0:1]$ on $i(C)$.  Let further $A$ be the $k$-algebra such that
  $i(C)\cap D_+(w)=\spec(A)$, and let
  $i^{-1}(\spec(A))=\spec(B)$. Then the map $i$ corresponds to an
  inclusion $A\subset B$ of rings such that
  \begin{enumerate}
  \item\label{item:image_1} $\dim_k(B/A)=1$,
  \item\label{item:image_2} $x B \subseteq A$ and $y B\subseteq A$, and
  \item\label{item:image_3} if $a\in A$ is not a zero divisor in $A$,
    then $a$ is not a zero divisor in $B$.
  \end{enumerate}
\end{lm}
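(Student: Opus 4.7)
The plan is to read everything off the short exact sequence
\[
 0 \to \calO_{i(C)} \to i_*\calO_C \to \calK \to 0
\]
used in the proof of Proposition~\ref{prop:image_cases}, restricted to the affine chart $D_+(w) \subset \bbP^3_k$. Because $i$ factors through its scheme-theoretic image, the first arrow is injective, so taking sections over $D_+(w)$ produces the injection $A \hookrightarrow B$ asserted by the lemma, together with an exact sequence of $A$-modules
\[
 0 \to A \to B \to B/A \to 0,
\]
in which $B/A \cong \Gamma(D_+(w),\calK)$. From Proposition~\ref{prop:image_cases} we know that $\calK$ has constant Hilbert polynomial $p_\calK(t) = 1$ and is supported at the single non-isomorphism point $p = [0:0:0:1]$, which lies in $D_+(w)$. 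Hence $B/A$ is a $1$-dimensional $k$-vector space, proving (i).

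For (ii) I would observe that $B/A$, being an $A$-module of length $1$, is isomorphic to $A/\m$ where $\m \subset A$ is the maximal ideal corresponding to $p$. In the affine coordinates $x/w,\ y/w$ on $D_+(w)$, the point $p$ is cut out by $x/w = y/w = 0$, so the images of $x$ and $y$ in $A$ lie in $\m$. Since $\m$ annihilates $B/A$, this means $xB \subseteq A$ and $yB \subseteq A$.

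For (iii) the key input is that $C$ is Cohen--Macaulay of pure dimension $1$, so $B$ has no embedded associated primes; the same is true of $A$, since $i(C)$ is a plane cubic and hence a Cartier divisor in its plane. In either ring the zero divisors are therefore exactly the union of the (finitely many) minimal primes. The inclusion $A \hookrightarrow B$ is finite, so for any minimal prime $\q$ of $B$ the contraction $\q \cap A$ is prime with $\dim(A/(\q \cap A)) = \dim(B/\q) = 1 = \dim A$, hence is minimal in $A$. Consequently, if $a \in A$ is not a zero divisor in $A$ then $a$ avoids every minimal prime of $A$, hence every $\q \cap A$, hence every minimal prime $\q$ of $B$, and therefore every associated prime of $B$; so $a$ is not a zero divisor in $B$.

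The main obstacle I expect is the bookkeeping for part (iii), namely confirming that neither $A$ nor $B$ has embedded primes and that minimal primes of $B$ contract to minimal primes of $A$. Once the Cohen--Macaulay hypothesis on $C$ (and the planarity of $i(C)$ giving $A$ Cohen--Macaulay as well) is used, the rest of the lemma is a direct consequence of the structure of the cokernel $\calK$.
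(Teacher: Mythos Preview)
Your proposal is correct and follows essentially the same route as the paper: both arguments read (i) off the length of the cokernel $\calK$ supported at $p\in D_+(w)$, deduce (ii) from $\Ann_A(B/A)=\m=(x,y)$, and prove (iii) by using the Cohen--Macaulay hypothesis on $C$ to identify associated primes of $B$ with minimal primes and then contracting to minimal primes of $A$. Your treatment is slightly more explicit (you spell out the dimension argument for why $\q\cap A$ is minimal, and you note that $A$ is Cohen--Macaulay, though this last observation is not actually needed for the implication you use), but there is no substantive difference in strategy.
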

\begin{proof}
  The first property \ref{item:image_1} follows directly since the
  Hilbert polynomials of $C$ and $i(C)$ differ by 1 and the
  non-isomorphism locus is contained in $\spec(A)$.

  Note that the quotient $B/A$ is only supported at the maximal ideal
  $\m=(x,y)$ of $A$. By property~\ref{item:image_1}, it follows that
  $\Ann_A(B/A)=\m$ and property \ref{item:image_2} holds.
 
  For property \ref{item:image_3}, assume that $a$ is a zero divisor
  of $B$, that is, that $a$ is contained in an associated prime ideal
  $\p$ of $B$. As the curve $C$ is Cohen--Macaulay without isolated
  points, it follows that $\p$ is a minimal prime ideal that is
  not maximal. The restriction $\p\cap A$ is then a minimal prime
  ideal in $A$ that contains $a$. This implies that $a$ is a zero
  divisor.
\end{proof}

\begin{thm}\label{thm:uniqueness}
  For every plane cubic curve $D\subset \bbP^3_k$ of degree $3$ with
  singular point $p\in D$, there exists at most one $k$-rational point
  $(C,i)$ on $\CM$ such that $i(C)=D$ and the induced
  map $C\to D$ is an isomorphism away from $p$.
\end{thm}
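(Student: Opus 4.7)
My plan is to reduce the theorem to a local $A$-algebra uniqueness question and then to exploit Gorenstein duality for the local ring of $D$ at $p$.

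I would first observe that the statement is local at $p$: two points $(C_1, i_1), (C_2, i_2)$ of $\CM(\spec(k))$ with common image $(D, p)$ agree uniquely with $D$ away from $p$, since $i_j\colon C_j\to D$ is an isomorphism there. By Lemma~\ref{lm:property_extension} applied to the chart $D \cap D_+(w) = \spec(A)$, with $A = k[x, y]/(f)$ a plane cubic equation, it then suffices to show uniqueness of the $A$-algebra $B$ satisfying properties (i)--(iii).

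The main step would be to embed $B$ into $Q(A)$, the total ring of fractions of $A$. Since $A$ is Cohen--Macaulay of dimension one (being a hypersurface in $k[x, y]$), its non-minimal maximal ideal $\m = (x, y)$ contains a non-zero-divisor $s$ by prime avoidance. I would write $B = A \oplus k \cdot u$ with $u \notin A$ (property (i)); then $s u \in A$ by (ii) and $s$ remains a non-zero-divisor in $B$ by (iii). Localizing at the set $S_A$ of non-zero-divisors of $A$ injects $A \hookrightarrow B \hookrightarrow S_A^{-1} B$, and the identity $u = (s u)/s$ places $u$ inside $Q(A) = S_A^{-1} A$; since every element of $B$ has the form $a + \lambda u$ with $a \in A$, $\lambda \in k$, this upgrades to an inclusion $B \hookrightarrow Q(A)$. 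Property (ii) then puts $u$ in the fractional ideal $(A : \m) = \{q \in Q(A) : q\m \subseteq A\}$, which via $s$ is isomorphic to $\Hom_A(\m, A)$.

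Finally, $A = k[x, y]/(f)$ is a hypersurface, hence a local complete intersection and therefore Gorenstein. Applying $\Hom_A(-, A)$ to $0 \to \m \to A \to k \to 0$ and using $\Hom_A(k, A) = 0$ (depth at least one) produces
\[
0 \to A \to \Hom_A(\m, A) \to \ext^1_A(k, A) \to 0,
\]
and the Gorenstein hypothesis in dimension one gives $\ext^1_A(k, A) \cong k$, so $(A : \m)/A$ is one-dimensional over $k$. The class $[u]$ in this quotient is nonzero since $u \notin A$, and any alternative extension $B' = A \oplus k \cdot u'$ yields a nonzero class $[u']$ likewise, so one-dimensionality forces $u' = \lambda u + a$ for some $\lambda \in k^\times$ and $a \in A$. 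Consequently $B' = A[u'] = A[u] = B$ as subrings of $Q(A)$, which gives the local and therefore global uniqueness. The main difficulty is the embedding $B \hookrightarrow Q(A)$: it is there that all three conditions of Lemma~\ref{lm:property_extension} must combine, and without it the Gorenstein calculation would not directly constrain $B$ itself.
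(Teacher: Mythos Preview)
Your argument is correct and is genuinely different from the paper's. The paper proceeds type by type: for the cases \ref{item:node}, \ref{item:cusp}, \ref{item:quadr+line}, \ref{item:triangle}, where $p$ is an isolated singularity, it invokes Lemma~\ref{lm:normalization} together with a Hilbert-polynomial count to identify $C\to D$ locally with the normalization; for the remaining cases \ref{item:quadr+tanget}, \ref{item:star}--\ref{item:triple} it fixes a specific cubic $q(x,y,w)$ in each projective equivalence class and solves explicitly for the relations $xb,yb,b^2$ that an element $b\in B\setminus A$ must satisfy, thereby pinning down $B$ up to $A$-isomorphism. Your route is uniform: you embed any admissible $B$ into $Q(A)$ via a non-zero-divisor $s\in\m$ (this is exactly where properties \ref{item:image_2} and \ref{item:image_3} of Lemma~\ref{lm:property_extension} are spent), land $B$ inside the fractional ideal $(A:\m)$, and then use that $A=k[x,y]/(f)$ is Gorenstein of dimension one to get $(A:\m)/A\cong\ext^1_A(k,A)\cong k$, forcing any two such $B$'s to coincide as $A$-subalgebras of $Q(A)$. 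The paper's approach has the advantage of producing the explicit presentation of each $B$ (matching the curves in Theorem~\ref{thm:existence}); yours has the advantage of treating all nine types at once and explaining conceptually why uniqueness holds---it is precisely the Gorenstein condition on the plane curve singularity that makes the overring unique.
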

\begin{proof}
  We prove the statement individually for the different possibilities
  of $D$ as listed in Proposition~\ref{prop:image_list}.  In the cases
  \ref{item:node} of a nodal curve, \ref{item:cusp}~of a cuspidal
  curve, \ref{item:quadr+line} of a conic and a line intersecting
  twice and \ref{item:triangle} of three coplanar lines, the point $p$
  is an isolated singular point. Lemma~\ref{lm:normalization} and
  comparison of the Hilbert polynomials imply that locally around $p$
  the map $C\ra D$ has to be the normalization, and hence it is
  unique.

  In the remaining cases, we can without loss of generality assume
  that $D$ is contained in the plane $z=0$ and that $p=[0:0:0:1]$,
  that is, $D$ is given by an ideal $I=(z,q(x,y,w))$, where
  $q(x,y,w)$ is a cubic form with singularity at $p$. Then we show
  that for \mbox{$D\cap D_+(w)=\spec(A)$} there exists, up to
  $A$-algebra isomorphism, only one $k$-algebra extension $A\subset B$
  satisfying the properties of Lemma~\ref{lm:property_extension}.

  In case \ref{item:quadr+tanget}, the curve $D$ consists of a
  conic and a tangent line, say $q(x,y,w)=x^2y+y^2w$. Note that all
  such curves are projectively equivalent, and hence it suffices to
  show the claim for one particular choice of cubic form $q(x,y,w)$.
  In the ring $A=k[x,y]/(x^2y+y^2)$ we have that $y^{n}=y(-x^2)^{n-1}$
  for every $n\in\bbN$. In particular, every element $a\in A$ can be
  written uniquely as $a=f(x)+yg(x)$ with $f(x),g(x)\in k[x]$. Now let
  $A\subset B$ be as above, and let $b\in B\setminus A$. As $xb,yb\in
  A$ and $y(xb)=x(yb)$, one can show that there are polynomials
  $g_1(x),g_2(x)\in k[x]$ such that \[\begin{cases} xb=
    xg_2(x)+(x^2+y)g_1(x) \\ yb = yg_2(x). \end{cases}\] We can write
  $g_1(x)=c+xu(x)$ for $c\in k$ and $u(x)\in k[x]$. Replacing $b$ by
  $b-g_2(x)-(x^2+y)u(x)$, we get that \[\begin{cases}
    xb=c(x^2+y)\\yb=0.\end{cases}\] Moreover, it follows that
  $xb^2=xc^2(x^2+y)$. As $x$ is not a zero divisor in $B$ by property
  \ref{item:image_3} in Lemma~\ref{lm:property_extension}, we can
  conclude that $b^2=c^2(x^2+y)$ and $c\neq 0$. After replacing $b$ by
  $c^{-1}b$, we can consider the $A$-algebra
  $B':=A[b]/(xb-(x^2+y),yb,b^2-(x^2+y))$ that lies between $A$ and
  $B$. As $\dim_k(B'/A)= 1 = \dim_k(B/A)$, it follows that $B\cong
  B'$.

  The cases \ref{item:star} to \ref{item:triple} are shown in the same
  way. For \ref{item:star}, the plane curve $D$ consists of three
  concurrent lines, and we can assume that $q(x,y,w)=x^2y-xy^2$ and
  get $B\cong A[b]/(xb-xy,yb-xy,b^2-xy)$. If the scheme-theoretic
  image is given by $q(x,y,w)=x^2w$, as in the situation of
  \ref{item:double+line1}, the extension is $B\cong
  A[b]/(xb,yb-x,b^2)$. If $p$ is the intersection point of a double
  line and a line as in \ref{item:double+line2}, we can assume that
  $q(x,y,w)=x^2y$ and get $B\cong A[b]/(xb-x^2,yb,b^2-x^2)$. In the
  last case \ref{item:triple}, the curve $D$ is the triple line given
  by $q(x,y,w)=x^3$. We show then that $B\cong A[b]/(xb,yb-x^2,b^2)$.
\end{proof}
\begin{rem}
  Note that the extensions $B$ constructed in the proof are affine
  charts of the curves $C$ listed in the proof of
  Theorem~\ref{thm:existence}.
\end{rem}

\subsection{\texorpdfstring{Classification of the points of
    $\boldsymbol{\CM}$}{Classification of the points of CM}}
We summarize the results of the previous subsections as follows.
\begin{thm}\label{thm:description_CM}
  There is a one-to-one correspondence between the $k$-rational points
  of $\CM$ and the union of the set of equidimensional Cohen--Macaulay
  curves in $\bbP^3_k$ with Hilbert polynomial $3t+1$ and the set of
  singular plane curves in $\bbP^3_k$ together with a singular point $p$
  on it.
\end{thm}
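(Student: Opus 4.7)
The plan is to assemble the results of the previous three subsections into an explicit bijection. I would define a map $\Phi$ from $\CM(\spec(k))$ to the stated union by cases. Given a class $(C,i)$, Proposition \ref{prop:image_cases} tells us that either $i$ is a closed immersion, in which case I send $(C,i)$ to $i(C)$, an equidimensional Cohen--Macaulay curve with Hilbert polynomial $3t+1$ in $\bbP^3_k$; or $i(C)$ is a plane curve of degree $3$ and arithmetic genus $1$ and $i$ is an isomorphism away from a single point, which by Proposition \ref{prop:non-iso_sing} is a singular point $p$ of $i(C)$, in which case I send $(C,i)$ to the pair $(i(C),p)$. The two output sets are disjoint because in the first case the image has Hilbert polynomial $3t+1$ whereas in the second case it has Hilbert polynomial $3t$, so $\Phi$ takes values in the disjoint union as claimed, and its value depends only on the equivalence class of $(C,i)$.

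I would then verify that $\Phi$ is a bijection. For injectivity, in the closed-immersion case, if $(C,i)$ and $(C',i')$ have the same image $D$ then the composition $i'^{-1}\circ i \colon C \to C'$ is an isomorphism intertwining $i$ and $i'$, so the two pairs are equivalent; in the non-closed-immersion case injectivity is exactly Theorem \ref{thm:uniqueness}. For surjectivity, any equidimensional Cohen--Macaulay curve $D\subset \bbP^3_k$ with Hilbert polynomial $3t+1$ is hit by $\Phi$ via its tautological closed immersion $(D, D\hookrightarrow \bbP^3_k)$, while for a singular plane cubic $D\subset \bbP^3_k$ together with a choice of singular point $p\in D$, Theorem \ref{thm:existence} directly supplies a preimage.

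The substantive work is already carried out in the earlier results: Proposition \ref{prop:image_cases} dichotomizes the behaviour of $i$, Proposition \ref{prop:image_list} enumerates the possible image types and locates $p$ among the singular points, Theorem \ref{thm:existence} produces the required Cohen--Macaulay curve above each datum by an explicit one-parameter deformation of a twisted cubic, and Theorem \ref{thm:uniqueness} rules out alternative preimages through a local analysis of the $A$-algebra extensions $A\subset B$ dictated by Lemma \ref{lm:property_extension}. At the level of Theorem \ref{thm:description_CM}, nothing more than the careful assembly of these ingredients into the bijection $\Phi$ is required, so no new obstacle arises here.
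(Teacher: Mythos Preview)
Your proposal is correct and follows essentially the same approach as the paper: both assemble Proposition~\ref{prop:image_cases}, Proposition~\ref{prop:image_list} (equivalently Proposition~\ref{prop:non-iso_sing}), Theorem~\ref{thm:existence}, and Theorem~\ref{thm:uniqueness} into the bijection. Your write-up is somewhat more explicit than the paper's---you spell out the map $\Phi$, the disjointness of the two target sets via their Hilbert polynomials, and the injectivity in the closed-immersion case---but the underlying argument is the same.
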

\begin{proof}
  We have seen in Proposition~\ref{prop:image_cases} that for every pair
  $(C,i)$ in $\CM(\spec(k))$, the map $i$ is either a closed
  immersion or an isomorphism onto a plane curve away from one point
  $p$ that has to be singular by
  Proposition~\ref{prop:image_list}. 

  Conversely, every embedding of an equidimensional Cohen--Macaulay
  curve with Hilbert polynomial $3t+1$ gives a point on
  $\CM$. Moreover, we have seen in Theorem~\ref{thm:existence} and
  Theorem~\ref{thm:uniqueness} that for every plane curve $D$ with
  singular point $p$ the exists a unique point $(C,i)$ on $\CM$ such
  that $i$ induces an isomorphism between $C$ and $D$ away from $p$.
\end{proof}

\subsection{\texorpdfstring{Specializations in
    $\boldsymbol{\CM}$}{Specializations in
    CM}} \label{subsec:specializations}

Comparing the ideals in the proof of Theorem~\ref{thm:existence}, we
can see that all points of $CM$ specialize to a point corresponding to
a pair $(C,i)$ where the scheme-theoretic image is a triple line.

\begin{ex}
  Let $(C,i)\in\CM(\spec(k[t]))$ be a family of Cohen--Macaulay curves
  where $C\subset \bbP^3_{k[t]}=\proj(k[t][x,y,w,u])$ is given by the
  ideal $I=(xu,yu-x(x+ty),u^2)$, and the map $i$ corresponds to the
  homomorphism of graded
  rings \[\func{\varphi}{k[t][x,y,z,w]}k[t][x,y,w,u]/I\] given by
  $\varphi(x)=x$, $\varphi(y)=y$, $\varphi(z)=0$ and $\varphi(w)=w$.

  For $t\neq 0$, the scheme-theoretic image $i_t(C_t)$ consists
  of the double line intersecting a line $(z,x^3+tx^2y)$, and $i_t$
  induces an isomorphism away from the intersection point.

  The scheme-theoretic image $i_0(C_0)$, on the contrary, is the plane
  triple line $(z,x^3)$.
\end{ex}

In a similar way, we can show that all types~\ref{item:node} to
\ref{item:double+line2} specialize to the case of a triple line
\ref{item:triple}. Specifically, we have the chart of specializations
as shown in Figure~\ref{fig:curves}.
\begin{figure}[ht]
 \includegraphics[width=0.75\textwidth]{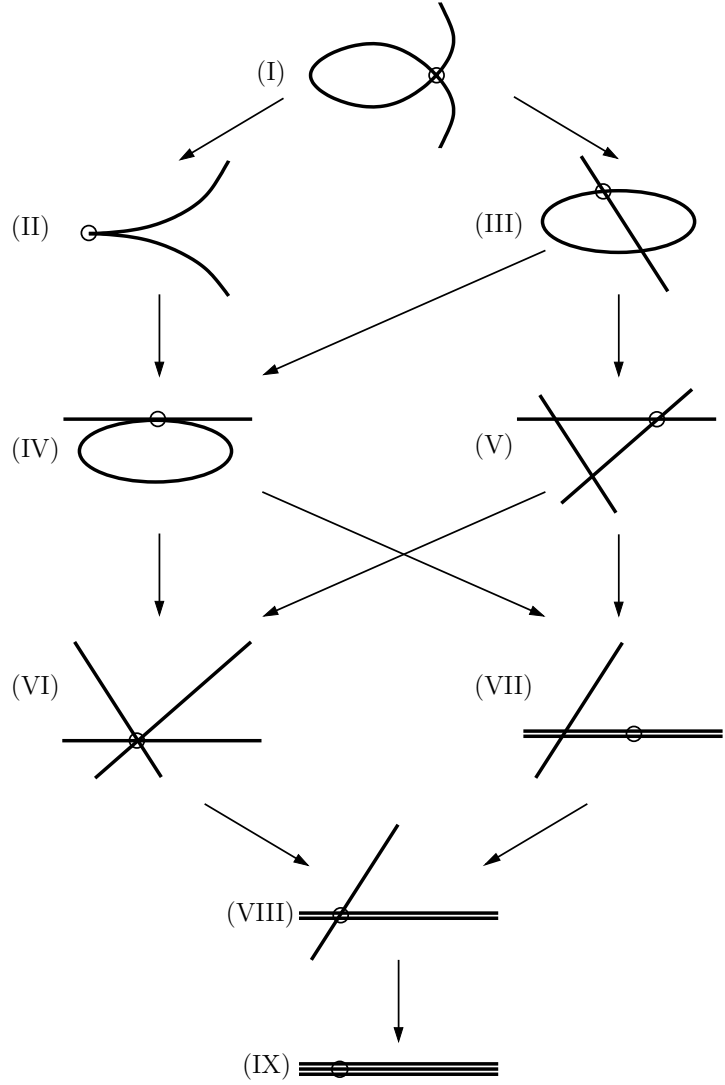}
 \caption{Specializations between points of $\CM$ with
   scheme-theoretic image and non-isomorphism locus of types
   \ref{item:node} to \ref{item:triple} as in
   Proposition~\ref{prop:image_list}.}
\label{fig:curves}
\end{figure}

A similar diagram of specializations for the component $H_0$ of the
Hilbert schemes of twisted cubics can be found in
\cite[p.~40]{Harris:curves}.

\section{The Hilbert scheme of twisted cubics}
Knowing the points of $\CM$, we can now establish a bijection with the
points of one component of the Hilbert scheme of twisted cubics.
\begin{thm}[{\cite{PS:twistedcubic}}]
  \label{thm:Hilbert_scheme}
  The Hilbert scheme $\Hilb_{\bbP^3}^{3t+1}$ consists of two
  components $H_0$ and $H_1$. The points of $H_0$ are the
  degenerations of a twisted cubic, namely all equidimensional
  Cohen--Macaulay curves in $\bbP^3$ with Hilbert polynomial $3t+1$
  and all singular, plane curves with an embedded point that is supported at a
  singularity and emerges from the plane. The component $H_0$ is
  smooth and has dimension~$12$.
\end{thm}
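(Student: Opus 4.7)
The plan is to follow the three-stage strategy of Piene and Schlessinger.

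\textbf{Classification of closed subschemes.} First I would catalogue all closed subschemes $X \subset \bbP^3_k$ with Hilbert polynomial $3t+1$. Decomposing $X$ into its pure one-dimensional part $X_0$ and a zero-dimensional extra structure (embedded or isolated points), and invoking the genus bound of \cite[Thm.~3.1]{Hartshorne:genus} exactly as in the proof of Proposition~\ref{prop:image_cases}, I obtain two strata: either $X$ is equidimensional Cohen--Macaulay with Hilbert polynomial $3t+1$ (a curve of degree $3$), or $X_0$ is a plane cubic of arithmetic genus $1$ and $X$ carries exactly one extra embedded or isolated point.

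\textbf{Construction of the two components.} The locus of plane cubics with an isolated point off the plane depends on $3+9+3=15$ parameters (plane, cubic in that plane, point of $\bbP^3$), and its closure $H_1$ is an irreducible $15$-dimensional component of $\Hilb_{\bbP^3}^{3t+1}$. On the other hand, the locus of smooth twisted cubics is a single $\mathrm{PGL}_4$-orbit with stabilizer of dimension $3$, hence irreducible of dimension $12$; call its closure $H_0$. I would then show that every equidimensional Cohen--Macaulay curve of Hilbert polynomial $3t+1$ is a flat degeneration of a twisted cubic, using one-parameter families of the type constructed in the remark following Theorem~\ref{thm:existence}, so $H_0$ contains all curves in stratum~(i). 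For plane singular cubics equipped with an embedded point at a singularity, an analogous explicit degeneration shows that such a configuration lies in $H_0$ precisely when the embedded direction is transverse to the plane, carving out $H_0 \cap H_1$. Combined with the classification above, this exhausts the points of $\Hilb_{\bbP^3}^{3t+1}$.

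\textbf{Smoothness and dimension.} To verify that $H_0$ is smooth of dimension $12$, I would compute the tangent space $T_{[X]}\Hilb_{\bbP^3}^{3t+1} = \Hom_{\calO_X}(\calI_X/\calI_X^2, \calO_X)$ at every type of point $[X] \in H_0$ singled out above and check that it has dimension $12$. At the generic point this reduces to the standard computation $h^0(\calN_{X/\bbP^3}) = 12$ for a rational normal cubic. For the remaining strata this becomes a local case-by-case calculation on each of the curves from Proposition~\ref{prop:image_list}, using the explicit ideals listed in the proof of Theorem~\ref{thm:existence} to write down the normal module. The main obstacle is precisely this last verification: one must establish that the tangent-space dimension stays constant on every stratum of $H_0$, and simultaneously rule out tangent directions at points of $H_0 \cap H_1$ that would displace $[X]$ into $H_1\setminus H_0$. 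This in turn requires a precise description of which first-order embedded-point structures at singularities can actually arise as flat limits of twisted cubics, which is the technical heart of the Piene--Schlessinger analysis.
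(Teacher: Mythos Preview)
The paper does not prove this theorem: it is quoted from Piene--Schlessinger \cite{PS:twistedcubic} and stated without proof, so there is no argument in the present paper to compare your proposal against. Your outline is a reasonable sketch of the strategy in \cite{PS:twistedcubic}, but one step is not quite right and would fail as stated.

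The problem is in the smoothness step. You propose to compute the tangent space $\Hom_{\calO_X}(\calI_X/\calI_X^2,\calO_X)$ at every type of point $[X]\in H_0$ and check that it has dimension $12$. But this is the tangent space to the \emph{full} Hilbert scheme, not to $H_0$. At a point of $H_0\cap H_1$ (a singular plane cubic with an embedded point at a singularity, emerging from the plane) the tangent space to $\Hilb_{\bbP^3}^{3t+1}$ has dimension $16$, not $12$: the two components, of dimensions $12$ and $15$, meet along an $11$-dimensional locus, and $12+15-11=16$. So the computation you describe cannot succeed there, and getting $12$ only on $H_0\setminus H_1$ does not by itself force $H_0$ to be smooth along the intersection. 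What Piene and Schlessinger actually do is reduce via the $\mathrm{PGL}_4$-action to a handful of Borel-fixed ideals, compute the tangent space (and in the intersection case a local \'etale slice) explicitly at those points, and verify directly that the Hilbert scheme is, in suitable local coordinates, a union of two smooth components meeting transversally. That local-structure computation is what replaces your proposed stratum-by-stratum dimension count, and it is exactly the ``technical heart'' you flag at the end; your plan just misidentifies the target number at the intersection points.
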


\begin{prop}\label{prop:comparison}
  There is a bijection between the set of $k$-rational points of the
  space of Cohen--Macaulay curves $\CM$ and the
  set of $k$-rational points of the component $H_0$ of the Hilbert
  scheme $\Hilb_{\bbP^3}^{3t+1}$. Moreover, the open subfunctor $U$ of
  $CM$ corresponding to closed immersions is isomorphic to the open
  subscheme of $H_0$ corresponding to Cohen--Macaulay curves.
\end{prop}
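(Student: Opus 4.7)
The plan is to combine the classification of $k$-rational points of $\CM$ from Theorem~\ref{thm:description_CM} with the description of $H_0$ from Theorem~\ref{thm:Hilbert_scheme}, constructing the bijection case by case, and then to identify the two open subfunctors with a common moduli problem.

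For the bijection on $k$-rational points, I would pair up the two classifications. The closed immersions $(C,i)\in\CM(\spec(k))$ correspond under $(C,i)\mapsto i(C)$ to equidimensional Cohen--Macaulay curves in $\bbP^3_k$ with Hilbert polynomial $3t+1$; by Theorem~\ref{thm:Hilbert_scheme} these are precisely the Cohen--Macaulay points of $H_0$. For the remaining points, Theorem~\ref{thm:description_CM} together with Theorems~\ref{thm:existence} and~\ref{thm:uniqueness} identifies non--closed-immersion points of $\CM$ with pairs $(D,p)$ consisting of a singular plane cubic $D\subset\bbP^3_k$ and one of its singular points $p$, while Theorem~\ref{thm:Hilbert_scheme} identifies the corresponding boundary points of $H_0$ with the same plane cubics equipped with an embedded point at a singularity emerging from the plane. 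The bijection then sends $(C,i)$ with image $D$ and non-isomorphism locus $p$ to the embedded-point structure at $p$, and is a bijection because by Piene--Schlessinger each pair $(D,p)$ admits exactly one such embedded-point structure in $H_0$.

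For the open subfunctor statement, I would let $U\subseteq\CM$ be the subfunctor of pairs $(C,i)$ such that $i$ is a closed immersion. This is an open subfunctor: the coherent sheaf $\calK=\coker(\calO_{\bbP^3_S}\to i_*\calO_C)$ on $\bbP^3_S$ has fiberwise-vanishing locus open in $S$ by Nakayama, and by Theorem~\ref{thm:description_CM} this locus is exactly where each fiber $(C_s,i_s)$ lies in $U$. Let $V\subseteq H_0$ be the open subscheme of Cohen--Macaulay curves. I would define natural transformations $U\to V$ by $(C,i)\mapsto i(C)\subset\bbP^3_S$ and $V\to U$ by $Z\mapsto (Z, Z\hookrightarrow\bbP^3_S)$; these are manifestly inverse on objects and functorial in $S$, so $U$ and $V$ represent the same functor and are therefore isomorphic as algebraic spaces, giving $U$ the structure of an open subscheme of $\CM$ isomorphic to $V$.

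The main obstacle I anticipate is verifying that the bijection at boundary points matches the Piene--Schlessinger embedded-point structure rather than some other length-$1$ extension of $D$ at $p$. Here the remark following Theorem~\ref{thm:existence} does most of the work: it exhibits each non--closed-immersion point $(C,i)\in\CM(\spec(k))$ as the central fiber of a flat one-parameter family $Z\subset\bbP^3_{k[t]}$ whose generic fiber is a twisted cubic and whose special fiber $Z_0$ is $D$ with a canonical embedded point at $p$. Since $Z_0$ is a flat degeneration of a twisted cubic, it lies in $H_0$, and by the uniqueness of the embedded structure at $(D,p)$ guaranteed by Theorem~\ref{thm:Hilbert_scheme} it coincides with the Piene--Schlessinger point attached to $(D,p)$. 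This matches the two parametrizations on the boundary and completes the correspondence.
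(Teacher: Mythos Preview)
Your proposal is correct and follows essentially the same approach as the paper, which also identifies $U$ with the Cohen--Macaulay locus in $H_0$ (citing \cite[Lemma~1]{PS:twistedcubic}) and then observes that the remaining $k$-points on both sides are parametrized by pairs $(D,p)$ via Theorem~\ref{thm:description_CM} and Theorem~\ref{thm:Hilbert_scheme}. Your final paragraph on matching embedded-point structures, while not incorrect, does more than the statement requires: since the proposition only asserts a bijection of sets of $k$-points (and not a morphism outside $U$), once both boundary loci are identified with the set of pairs $(D,p)$ any set-theoretic bijection suffices.
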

\begin{proof}
  The locus $U$ in $\CM$ coincides with the Cohen--Macaulay locus in
  $\Hilb_{\bbP^3}^{3t+1}$. Moreover, every equidimensional
  Cohen--Macaulay curve in $\bbP^3$ corresponds to a point of $H_0$,
  see the proof of \cite[Lemma~1]{PS:twistedcubic}. The remaining
  points in both $\CM(\spec(k))$ and $H_0(\spec(k))$ are in bijection
  with the set of pairs consisting of a plane curve of degree $3$ in
  $\bbP^3$ and a singular point on it, see
  Theorem~\ref{thm:description_CM} and
  Theorem~\ref{thm:Hilbert_scheme}.
\end{proof}
\begin{cor}\label{cor:irred_12}
  The space of Cohen--Macaulay curves $\CM$ has an irreducible open
  dense subscheme that is smooth of dimension $12$. In
  particular, $\CM$ is irreducible and has dimension~$12$. 
\end{cor}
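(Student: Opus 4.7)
The plan is to identify the open subfunctor $U\subset \CM$ of closed immersions with a known open subscheme of the Hilbert component $H_0$, transfer smoothness and dimension across that isomorphism, and then show density of $U$ in $\CM$ using the flat-degeneration property established in Theorem~\ref{thm:existence}.

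First, by Proposition~\ref{prop:comparison}, the open subfunctor $U\subset \CM$ of pairs $(C,i)$ with $i$ a closed immersion is isomorphic to the open subscheme of $H_0$ parameterizing equidimensional Cohen--Macaulay curves with Hilbert polynomial $3t+1$. Since every twisted cubic is Cohen--Macaulay and twisted cubics form a dense locus in $H_0$, this Cohen--Macaulay locus is an open dense subscheme of $H_0$. By Theorem~\ref{thm:Hilbert_scheme}, $H_0$ is smooth and irreducible of dimension $12$, and so $U$ inherits all three properties.

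Next I would show $U$ is dense in $\CM$. Let $(C_0,i_0)$ be any $k$-rational point of $\CM$ outside $U$; by Proposition~\ref{prop:image_list} its image is a singular plane cubic. Theorem~\ref{thm:existence} explicitly realizes $C_0$ as the special fiber of a flat one-parameter family of Cohen--Macaulay curves whose generic fiber is a twisted cubic, and the construction gives a pair $(C,i)$ over a regular one-dimensional base $T$ (the spectrum of a DVR or of $k[t]_{(t)}$). By the moduli property of $\CM$ this family defines a morphism $T\to\CM$ sending the generic point into $U$ and the closed point to $(C_0,i_0)$. Hence every point of $\CM\setminus U$ lies in the closure of $U$, so $U$ is dense.

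Combining the two steps, $\CM$ contains an open dense subscheme $U$ that is smooth, irreducible, and of dimension~$12$. Since the closure of an irreducible dense open subspace is the whole space, $\CM$ itself is irreducible, and its dimension equals $\dim U = 12$. The only delicate point is the density argument: although the pointwise specialization statement in Theorem~\ref{thm:existence} is formulated curve by curve, one must invoke the representability of $\CM$ (Theorem in Section~2) to turn each flat degeneration into an actual morphism $T\to \CM$ witnessing the specialization inside the algebraic space.
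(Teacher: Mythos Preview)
Your proof is correct and follows the same approach as the paper: both obtain irreducibility, smoothness, and dimension~$12$ for $U$ by transferring them from the Cohen--Macaulay locus of $H_0$ via Proposition~\ref{prop:comparison} and Theorem~\ref{thm:Hilbert_scheme}. Your density argument via the one-parameter families underlying Theorem~\ref{thm:existence} spells out what the paper's two-line proof simply asserts; note that the families of \emph{pairs} $(C,i)$ you need are visible in the Remark following that theorem (setting $u=t^{-1}z$, so that $\tilde\varphi_t(z)=tu$ gives closed immersions for $t\neq 0$) rather than in the theorem statement itself.
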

\begin{proof}
  The subspace $U$ in Proposition~\ref{prop:comparison} has the
  required properties. This implies that $\CM$ itself is irreducible
  and has dimension ~$12$.
\end{proof}

\section{Deformations}\label{sec:deformations}
The goal of this section is to show that $\CM$ is smooth. In
particular we compute the dimension of the tangent space of $\CM$ at
one certain point.

A {\em first-order deformation} of a point $(C,i)\in
\CM_{\bbP^n}^{at+b}(\spec(k))$ is an element $(\widetilde C, \tilde i)\in
\CM_{\bbP^n}^{at+b}(\spec(k[\e])$ such that the
diagram \[\xymatrix{C\ar[r]^i \ar@{^(->}[d]& \bbP^n_k \ar@{^(->}[d] \\
  \widetilde C\ar[r]^{\tilde i} & \bbP^n_{k[\e]}}\] is Cartesian. The
space of these first-order deformations is isomorphic to the tangent
space of $\CM_{\bbP^n}^{at+b}$ at the point $(C,i)$.

We show first that the curve $C$ can be embedded in a projective space
$\bbP^N_k$ in such a way that the scheme $\widetilde C$ is given as
deformation of $C$ in $\bbP^N_k$.

\begin{prop}[{\cite[Proposition 4.14]{CM}}]\label{prop:embedding}
  There exist $m,N\in \bbN$ such that for every field $k$ and every
  $(C,i)\in \CM_{\bbP^n}^{at+b}(\spec(k))$ there exists a closed
  immersion $\injfunc j C{\bbP^N_k}$ such that
  $j^*\calO_{\bbP^N_k}(1)=i^*\calO_{\bbP^n_k}(m)$ and
  $j^*x_0,\ldots,j^*x_N$ form a basis of
  $H^0(C,i^*\calO_{\bbP^n_k}(m))$.
\end{prop}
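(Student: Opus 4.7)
Plan. The statement is a uniform boundedness result: every pair $(C,i)$ has the same Hilbert polynomial $p(t)=at+b$, so a single large twist $m$ should simultaneously yield the required embedding. Setting $L:=i^*\calO_{\bbP^n_k}(1)$, the strategy is to find $m$, depending only on $p$ and $n$, such that for every $(C,i)$ the line bundle $L^m$ is very ample on $C$, the cohomology $H^1(C,L^m)$ vanishes (so $h^0(C,L^m)=p(m)$), and the restriction $H^0(\bbP^n_k,\calO(m))\ra H^0(C,L^m)$ is surjective. Setting $N:=p(m)-1$ and choosing any basis of $H^0(C,L^m)$ then yields the claimed closed immersion $j$ with $j^*\calO_{\bbP^N_k}(1)=L^m$.

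The first two properties would follow from a uniform Castelnuovo--Mumford regularity bound on the coherent sheaf $i_*\calO_C$ on $\bbP^n_k$. Using the short exact sequence $0\ra \calO_{i(C)}\ra i_*\calO_C\ra \calK\ra 0$ from the proof of Proposition~\ref{prop:image_cases}, the scheme-theoretic image $i(C)$ is a closed subscheme of $\bbP^n_k$ of pure dimension one with Hilbert polynomial $at+(b-l)$, where $l=\dim_k\calK\ge 0$. Since $l$ is bounded above by a Castelnuovo-type estimate for degree-$a$ curves in $\bbP^n$, only finitely many Hilbert polynomials arise for $i(C)$. Gotzmann's regularity theorem then bounds the regularity of the ideal sheaves $\calI_{i(C)}$, and hence of $\calO_{i(C)}$, uniformly in $(C,i)$ and $k$. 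Because $\calK$ has zero-dimensional support, its higher cohomology vanishes in every twist, so the long exact sequence propagates the bound to $i_*\calO_C$, yielding a uniform $m_0$ beyond which $i_*\calO_C$ is $m$-regular. For $m\ge m_0$ this gives $H^i(\bbP^n_k,i_*\calO_C(m))=0$ for $i\ge 1$, the surjectivity of the restriction from $\bbP^n_k$, and global generation of $i_*\calO_C(m)$ on $\bbP^n_k$; the surjective counit $i^*i_*\calO_C\surj \calO_C$ then transfers global generation to $L^m$ on $C$.

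The main obstacle is to upgrade global generation of $L^m$ to very ampleness with a bound that is independent of $(C,i)$ and of the field $k$. The line bundle $L$ is ample, since $i$ is finite and $\calO_{\bbP^n_k}(1)$ is ample, and the polarized curves $(C,L)$ form a bounded family: the degree $a=\deg L$ and the Euler characteristic $b=\chi(\calO_C)$ are fixed, and $C$ is Cohen--Macaulay of pure dimension one. A uniform very-ampleness bound for such a bounded family of polarized curves then permits one to enlarge $m_0$ so that $L^m$ separates points and tangent vectors on every $C$ in the family. Alternatively, one can argue directly from the regularity of $i_*\calO_C$ by twisting with the ideal sheaves of pairs of closed points on $C$ and using Proposition~\ref{prop:non-iso_sing} to control the non-isomorphism locus of $i$. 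With very ampleness secured, any basis of $H^0(C,L^m)$ defines a closed immersion $j\mathpunct : C\hookrightarrow \bbP^N_k$ with the required properties.
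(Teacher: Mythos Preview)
The paper does not prove this proposition; it is quoted from \cite[Proposition~4.14]{CM} and stated without argument. There is therefore no proof in the present paper to compare your attempt against.

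Your sketch is a plausible outline of the standard boundedness argument behind results of this kind. The uniform regularity bound on $i_*\calO_C$ via the short exact sequence $0\to\calO_{i(C)}\to i_*\calO_C\to\calK\to 0$, Gotzmann for the finitely many possible Hilbert polynomials of $i(C)$, and the vanishing of higher cohomology of the zero-dimensional $\calK$ is sound and gives the vanishing and $h^0$ statements. The step you flag as the main obstacle, upgrading global generation of $L^m$ to very ampleness uniformly in $(C,i)$ and $k$, is indeed where the work lies; your bounded-family suggestion is the right idea but, as written, it is an assertion rather than an argument, and the alternative via ideal sheaves of pairs of points would need Proposition~\ref{prop:non-iso_sing} to be supplemented by control over what happens \emph{inside} the singular locus of $i(C)$, which you have not supplied.

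It may help to note that the paper does establish the sharp form of this statement in the case it cares about ($n=3$, $p(t)=3t+1$), namely Lemma~\ref{lm:twisted_cubic_emb}, which shows one may take $m=1$ and $N=3$. That argument is entirely different from yours: it does not use general regularity bounds at all, but instead relies on the classification in Theorems~\ref{thm:existence} and~\ref{thm:uniqueness} to know that every such $C$ already carries an embedding $h\colon C\hookrightarrow\bbP^3_k$ with $h^*\calO(1)=i^*\calO(1)$, and then reads off the regularity and $h^0$ from the explicit Hilbert--Burch resolution of \cite{Ellingsrud}. This buys optimal constants and avoids the very-ampleness difficulty altogether, at the cost of being completely specific to twisted cubics.
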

\begin{prop}\label{prop:deformation_subscheme}
  Let $(\widetilde{C}',\tilde{i}')\in
  \CM_{\bbP^n}^{at+b}(\spec(k[\e]))$ be a first-order deformation of
  the point $(C,i)\in \CM_{\bbP^n}^{at+b}(\spec(k))$. Suppose that the
  curve $C$ is given as a closed subscheme $j\mathpunct: C\subset
  \bbP^N_k$ as in Proposition~\ref{prop:embedding}. Then $(\widetilde
  C',\tilde i')=(\widetilde C,\tilde i)$ in
  $\CM_{\bbP^n}^{at+b}(\spec(k[\e]))$ for a first-order deformation
  $\widetilde C$ of the closed subscheme $C$ of $\bbP^N_k$.
\end{prop}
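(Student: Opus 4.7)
The plan is to construct a closed immersion $\tilde{j}\colon \widetilde{C}'\hookrightarrow \bbP^N_{k[\e]}$ lifting the closed immersion $j\colon C\hookrightarrow \bbP^N_k$ from Proposition~\ref{prop:embedding}. Once this is in place, I would set $\widetilde{C}:=\tilde{j}(\widetilde{C}')\subseteq\bbP^N_{k[\e]}$; flatness of $\widetilde{C}'$ over $k[\e]$ together with $\tilde{j}$ being a closed immersion exhibits $\widetilde{C}$ as a first-order deformation of the closed subscheme $C\subset\bbP^N_k$. The isomorphism $\widetilde{C}'\stackrel{\sim}{\longrightarrow}\widetilde{C}$ induced by $\tilde{j}$ then transports $\tilde{i}'$ to a morphism $\tilde{i}\colon\widetilde{C}\to\bbP^n_{k[\e]}$, and the two pairs $(\widetilde{C}',\tilde{i}')$ and $(\widetilde{C},\tilde{i})$ represent the same class in $\CM_{\bbP^n}^{at+b}(\spec(k[\e]))$ by the equivalence relation in the definition of the moduli functor.

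To produce $\tilde{j}$, I would work with the line bundle $\calL:=(\tilde{i}')^*\calO_{\bbP^n_{k[\e]}}(m)$ on $\widetilde{C}'$, whose restriction to the closed fiber is $i^*\calO_{\bbP^n_k}(m)=j^*\calO_{\bbP^N_k}(1)$. Since $(\tilde{i}')_*\calO_{\widetilde{C}'}$ is coherent on $\bbP^n_{k[\e]}$ and flat over $k[\e]$, cohomology and base change, together with the sufficiently large choice of $m$ in Proposition~\ref{prop:embedding}, ensures that $H^0(\widetilde{C}',\calL)$ is a free $k[\e]$-module of rank $N+1$ whose reduction mod $\e$ recovers $H^0(C,j^*\calO_{\bbP^N_k}(1))$. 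I then lift the basis $j^*x_0,\ldots,j^*x_N$ to a basis $\tilde{s}_0,\ldots,\tilde{s}_N$ of $H^0(\widetilde{C}',\calL)$.

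These sections $\tilde{s}_\ell$ are base-point free on $\widetilde{C}'$: at any $\tilde{x}\in\widetilde{C}'$ lying over $x\in C$, some $j^*x_\ell$ generates the stalk $(j^*\calO_{\bbP^N_k}(1))_{x}$ because $j$ is a closed immersion into projective space, and Nakayama's lemma applied to the nilpotent thickening $C\subset\widetilde{C}'$ then shows that the corresponding $\tilde{s}_\ell$ generates $\calL_{\tilde{x}}$. Hence the $\tilde{s}_\ell$ define a morphism $\tilde{j}\colon\widetilde{C}'\to\bbP^N_{k[\e]}$ whose reduction is $j$. Since $\widetilde{C}'\to\spec(k[\e])$ is proper, $\tilde{j}$ is proper; and since the reductions of the induced local maps $\calO_{\bbP^N_{k[\e]},\tilde{j}(\tilde{x})}\to\calO_{\widetilde{C}',\tilde{x}}$ are the surjections coming from the closed immersion $j$, a second application of Nakayama's lemma shows that $\tilde{j}$ is itself a closed immersion.

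The main obstacle is the cohomological input in the second step---confirming that $H^0(\widetilde{C}',\calL)$ is free of rank exactly $N+1$ and that the restriction map to $H^0(C,\calL|_C)$ is surjective. This relies on $m$ having been chosen large enough in Proposition~\ref{prop:embedding} for the higher cohomology of $(\tilde{i}')_*\calO_{\widetilde{C}'}(m)$ on $\bbP^n_{k[\e]}$ to vanish, so that the formation of global sections commutes with restriction to the closed fiber. Once the lifted basis is in hand, the construction of $\tilde{j}$ and the verification that it is a closed immersion are routine applications of Nakayama's lemma over the Artinian ring $k[\e]$.
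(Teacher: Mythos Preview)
Your proposal is correct and follows essentially the same route as the paper: lift the basis of $H^0(C,i^*\calO_{\bbP^n_k}(m))$ to a free basis of $H^0(\widetilde C',(\tilde i')^*\calO_{\bbP^n_{k[\e]}}(m))$, use it to define a closed immersion $\tilde j\colon \widetilde C'\hookrightarrow \bbP^N_{k[\e]}$ reducing to $j$, and take $\widetilde C$ to be its image. The paper packages the cohomology-and-base-change step and the closed-immersion step as citations to \cite[Propositions~4.16 and~4.19]{CM}, and additionally invokes the rigidity result \cite[Theorem~2.19]{CM} to check that the induced automorphism of $C$ is the identity---a point your construction handles directly by building $\tilde j$ to reduce to $j$.
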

\begin{proof}
  Let $m,N\in \bbN$ be as in Proposition~\ref{prop:embedding}. By
  \cite[Proposition~4.16]{CM}, the $k[\e]$-module $H^0(\widetilde
  C',(\tilde i')^*\calO_{\bbP^n_{k[\e]}}(m))$ is free of rank $N+1$
  and \[H^0(C,i^*\calO_{\bbP^n_k}(m)) = H^0(\widetilde C',(\tilde
  i')^*\calO_{\bbP^n_{k[\e]}}(m))\otimes_{k[\e]} k[\e]/(\e).\]
  Therefore we can choose a basis $\tilde s_0,\ldots,\tilde s_N$ of
  $H^0(\widetilde C',(\tilde i')^*\calO_{\bbP^n_{k[\e]}}(m))$ that
  lifts the basis $j^*x_0,\ldots,j^* x_N$ of
  $H^0(C,i^*\calO_{\bbP^n_k}(m))$. Then, by
  \cite[Proposition~4.19]{CM}, the choice of global sections induces a
  closed immersion $\injfunc{\tilde j}{\widetilde
    C'}{\bbP^N_{k[\e]}}$. Note that the commutative
  diagram \[\xymatrix{
    \bbP^N_k \ar@{^(->}[d]&\ar@{}[l]|-*{\supset} C\ar@{^(->}[d] \\
    \bbP^N_{k[\e]}& \ar@{_(->}[l]_-{\tilde j} \widetilde C' }\] is
  Cartesian. Now let $\widetilde C$ be the scheme-theoretic image of
  the closed immersion $\tilde j$ and $\iso {\tilde \alpha}
  {\widetilde C}{\widetilde C'}$ the induced isomorphism. Then
  $\widetilde C\subset \bbP^N_{k[\e]}$ is flat over $\spec(k[\e])$,
  and its restriction modulo $\e$ is $C$. Hence $\tilde C$ is a
  first-order deformation of $C\subset \bbP^3_k$.  The restriction
  $\alpha$ of $\tilde \alpha$ modulo $\e$ is an automorphism of $C$
  such that $i\circ \alpha=i$. Then, by \cite[Theorem~2.19]{CM}, the
  map $\alpha$ is the identity. With $\tilde i:=\tilde i'\circ \tilde
  \alpha$, it follows that the restriction of $\tilde i$ modulo $\e$
  is $i$ and that $(\widetilde C',\tilde i')=(\widetilde C,\tilde i)$
  in $\CM_{\bbP^n}^{at+b}(\spec(k[\e]))$.
\end{proof}

From now on we treat the special case $n=3$ and $p(t)=3t+1$. We show
that $\mathit{CM}$ is smooth by proving that the tangent space at
every point has dimension 12. In Section~\ref{subsec:specializations},
we have seen that all maps $\func i C \bbP^3_k$ specialize to a map
such that the scheme-theoretic image is a plane triple line. Hence it
suffices to study the $k[\varepsilon]$-deformations at such a point of
$\CM$.
\begin{lm}\label{lm:twisted_cubic_emb}
  Let $(C,i)\in \CM(\spec(k))$ be a point of $\CM$. Then the following
  holds.
  \begin{enumerate}
  \item\label{item:twisted_cubic_emb1} The coherent sheaf $i_*\calO_C$
    is $1$-regular.
  \item\label{item:twisted_cubic_emb2} $h^0(C,i^*\calO_{\bbP^3_k}(1))=4$.
  \item\label{item:twisted_cubic_emb3} The global sections of
    $i^*\calO_{\bbP^3_k}(1)$ separate points and tangent vectors.
  \item\label{item:twisted_cubic_emb4} Every choice of basis of
    $H^0(C,i^*\calO_{\bbP^3_k}(1))$ gives a closed immersion $\injfunc j
    C \bbP^3_k$.
  \end{enumerate}
\end{lm}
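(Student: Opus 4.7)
My plan is to leverage the closed immersion $\injfunc{h}{C}{\bbP^3_k}$ provided by Theorem~\ref{thm:existence}, which satisfies $h^*\calO_{\bbP^3_k}(1)=i^*\calO_{\bbP^3_k}(1)$. This realizes $C$ as a closed subscheme of $\bbP^3_k$ with Hilbert polynomial $3t+1$, hence as a point of the component $H_0$ from Theorem~\ref{thm:Hilbert_scheme}. For every $m$, the equality of line bundles gives
\[H^q(\bbP^3_k,i_*\calO_C(m))=H^q(C,i^*\calO_{\bbP^3_k}(m))=H^q(C,h^*\calO_{\bbP^3_k}(m))=H^q(\bbP^3_k,h_*\calO_C(m)),\]
so all cohomological calculations may be performed on the closed subscheme $h(C)\subset\bbP^3_k$.

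For \ref{item:twisted_cubic_emb1}, it suffices to show $H^1(C,\calO_C)=0$, since $H^q$ vanishes automatically for $q\geq 2$ by dimension. Every curve $C$ on $H_0$ is arithmetically Cohen--Macaulay with a Hilbert--Burch type resolution
\[0\to\calO_{\bbP^3_k}(-3)^{\oplus 2}\to\calO_{\bbP^3_k}(-2)^{\oplus 3}\to\calO_{\bbP^3_k}\to\calO_C\to 0,\]
and a short hypercohomology computation from this resolution yields $H^0(\calO_C)=k$ and $H^1(\calO_C)=0$. Part \ref{item:twisted_cubic_emb2} is then immediate: $i_*\calO_C(1)$ is $0$-regular, so
\[h^0(C,i^*\calO_{\bbP^3_k}(1))=\chi(i_*\calO_C(1))=p(1)=4.\]

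For \ref{item:twisted_cubic_emb3} and \ref{item:twisted_cubic_emb4}, observe that the closed immersion $h$ is determined by the four sections $h^*x,h^*y,h^*w,h^*u\in H^0(C,h^*\calO_{\bbP^3_k}(1))$. Inspection of the defining ideals constructed in the proof of Theorem~\ref{thm:existence} shows that none of them contains a linear form, so $C$ is non-degenerate in $\bbP^3_k$, and consequently these four sections are linearly independent. Combined with $h^0=4$ from \ref{item:twisted_cubic_emb2}, they form a basis of $H^0(C,i^*\calO_{\bbP^3_k}(1))$, so the complete linear system $|i^*\calO_{\bbP^3_k}(1)|$ defines the closed immersion $h$. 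Very ampleness of a complete linear system is equivalent to separation of points and tangent vectors, proving \ref{item:twisted_cubic_emb3}. For \ref{item:twisted_cubic_emb4}, any other basis differs from the one defining $h$ by an element of $\mathrm{GL}_4(k)$, so the induced morphism $j$ is projectively equivalent to $h$ and therefore also a closed immersion.

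The main obstacle is establishing the Hilbert--Burch resolution, or equivalently the vanishing $H^1(\calO_C)=0$, for \emph{every} curve $C$ on $H_0$ -- in particular for non-reduced degenerations such as the triple line of case~\ref{item:triple}. This may be handled either by invoking Piene--Schlessinger's explicit description of $H_0$ in \cite{PS:twistedcubic}, or by a short case-by-case verification using the explicit ideals $I$ listed in Theorem~\ref{thm:existence}; the non-degeneracy needed in the last step is similarly checked by inspecting that each $I$ is generated in degree at least two.
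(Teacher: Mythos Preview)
Your proposal is correct and follows essentially the same route as the paper: both use the auxiliary embedding $h$ from Theorems~\ref{thm:existence} and~\ref{thm:uniqueness}, identify the cohomology of $i_*\calO_C$ with that of $h_*\calO_C$ via the projection formula, and deduce $1$-regularity and $h^0=4$ from the Hilbert--Burch resolution of the ideal sheaf. The paper dispatches what you call the ``main obstacle'' by citing \cite[Exemple~1]{Ellingsrud} for the resolution $0\to\calO_{\bbP^3_k}(-3)^{\oplus 2}\to\calO_{\bbP^3_k}(-2)^{\oplus 3}\to\calI\to 0$, and handles \ref{item:twisted_cubic_emb3}--\ref{item:twisted_cubic_emb4} slightly more directly via \cite[Proposition~II.7.3]{Hartshorne} without the non-degeneracy step.
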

\begin{proof}
  We have seen in Theorem~\ref{thm:existence} and~\ref{thm:uniqueness}
  that the curve $C$ is a Cohen--Macaulay specialization of a twisted
  cubic and that it has an embedding $\injfunc h C \bbP^3_k$ such that
  $i^*\calO_{\bbP^3_k}(1) = h^*\calO_{\bbP^3_k}(1)$. Let $\calI$ be
  the sheaf of ideals describing $C$ as a subscheme in
  $\bbP^3_k$. Then by \cite[Exemple 1]{Ellingsrud} there exists a
  short exact sequence \[\xymatrix{0\ar[r] &
    \calO_{\bbP^3_k}(-3)^{\oplus 2} \ar[r] &
    \calO_{\bbP^3_k}(-2)^{\oplus 3} \ar[r] & \calI \ar[r] & 0.}\] From
  the corresponding long exact sequence in cohomology we conclude that
  $H^r(\calI(d))=0$ for all $d$ and $r=1$ and $r\geq 4$. Moreover, we
  get that $H^0(\calI(d))=0$ for $d<2$, $H^2(\calI(d))=0$ for $d\geq
  0$ and $H^3(\calI(d))=0$ for $d\geq -1$. In particular, it follows
  that $\calI$ is $2$-regular. Applying these results on the
  cohomology of $\calI$ to the short exact sequence \[\xymatrix{0
    \ar[r] & \calI \ar[r] & \calO_{\bbP^3_k} \ar[r] & h_*\calO_C\ar[r]
    &0,}\] we conclude that $h^0(\bbP^3_k,(h_*\calO_C)(1)) =
  h^0(\bbP^3_k,\calO_{\bbP^3_k}(1)) = 4$ and that $h_*\calO_C$ is
  $1$-regular. Finally, due to the projection formula and the fact
  that the maps $i$ and $h$ are finite, we have that 
  \begin{align*}
    H^r(\bbP^3_k,(i_*\calO_C)(d)) & = H^r(C,i^*\calO_{\bbP^3_k}(d)) =
    H^r(C,h^*\calO_{\bbP^3_k}(d)) = \\ &=
    H^r(\bbP^3_k,(h_*\calO_C)(d))
  \end{align*}
  for all $d$ and $r\geq 0$, and we have shown properties
  \ref{item:twisted_cubic_emb1} and \ref{item:twisted_cubic_emb2}.

  By \cite[Proposition II.7.3]{Hartshorne}, the global sections of the
  invertible sheaf $i^*\calO_{\bbP^3_k}(1)=h^*\calO_{\bbP^3_k}(1)$
  separate points and tangent vectors, hence we have
  property~\ref{item:twisted_cubic_emb3}. In particular, every basis
  of global sections of $i^*\calO_{\bbP^3_k}(1)$ separates points and
  tangent vectors and induces a closed immersion $\injfunc j C
  \bbP^3_k$, again by \cite[Proposition II.7.3]{Hartshorne}. This
  shows property~\ref{item:twisted_cubic_emb4} and concludes the
  proof.
\end{proof}
In terms of the notation of Proposition~\ref{prop:embedding}, the lemma
says that we have that $m=1$ and $N=3$ in the twisted cubic case.

\begin{prop}\label{prop:triple_smooth}
  Let $(C,i)$ be a point in $\mathit{CM}(\spec(k))$ such that the
  scheme-theoretic image is a plane triple line. Then the tangent
  space at this point has dimension $12$.
\end{prop}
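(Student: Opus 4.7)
By Theorem~\ref{thm:uniqueness} and case~(IX) of Theorem~\ref{thm:existence}, it suffices to consider $C=\Proj(R)$ with $R=k[x,y,w,u]/(xu,\ yu-x^2,\ u^2)$ and $i$ induced by $(x',y',z',w')\mapsto(x,y,0,w)$; by Lemma~\ref{lm:twisted_cubic_emb} the basis $(x,y,w,u)$ of $H^0(C,\calO_C(1))$ gives an auxiliary closed immersion $j\mathpunct:C\hookrightarrow\bbP^3$. Proposition~\ref{prop:deformation_subscheme} reduces the problem to analysing first-order deformations $(\tilde C,\tilde i)$ with $\tilde C\subset\bbP^3_{k[\e]}$ embedded.

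The plan is to play two deformation-theoretic exact sequences off each other. The first describes deformations of the pair $(C,i)$:
\[
0 \to H^0(i^*T_{\bbP^3})/\operatorname{im} H^0(\Theta_C) \to T_{(C,i)}\CM \to T^1_C \to 0,
\]
where the left term parametrises first-order lifts of $i$ over the trivial deformation of $C$ modulo inner automorphisms, and the right is abstract first-order deformations of $C$. Injectivity of the map $H^0(\Theta_C)\to H^0(i^*T_{\bbP^3})$ will come from [CM, Theorem~2.19]: its kernel consists of infinitesimal automorphisms of $C$ commuting with $i$, which must be trivial. The right-exactness follows from $H^1(i^*T_{\bbP^3})=0$, obtained from the pulled-back Euler sequence and the $1$-regularity of $\calO_C$ in Lemma~\ref{lm:twisted_cubic_emb}.

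The second sequence is the tangent-normal sequence for the embedding $j$, combined with Theorem~\ref{thm:Hilbert_scheme}:
\[
0 \to H^0(\Theta_C) \to H^0(T_{\bbP^3}|_C) \to H^0(N_{C/\bbP^3}) \to T^1_C \to 0,
\]
which identifies $T^1_C$ as the cokernel on the right. Here $h^0(N_{C/\bbP^3})=\dim H_0=12$ since $j(C)\in H_0$ lies in the smooth Cohen--Macaulay locus. Solving the second sequence for $T^1_C$ and substituting into the first, the contributions of $h^0(\Theta_C)$ cancel and one obtains
\[
\dim T_{(C,i)}\CM \;=\; h^0(i^*T_{\bbP^3}) + h^0(N_{C/\bbP^3}) - h^0(T_{\bbP^3}|_C).
\]
Both $h^0(i^*T_{\bbP^3})$ and $h^0(T_{\bbP^3}|_C)$ equal $4\,h^0(\calO_C(1))-h^0(\calO_C)=16-1=15$ via the pulled-back Euler sequence (using $i^*\calO_{\bbP^3}(1)=j^*\calO_{\bbP^3}(1)=\calO_C(1)$ and the values $h^0(\calO_C)=1$, $h^1(\calO_C)=0$ from Lemma~\ref{lm:twisted_cubic_emb}). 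Therefore $\dim T_{(C,i)}\CM = 15+12-15=12$.

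The main obstacle is rigorously establishing both exact sequences with right-exact termination, in particular identifying the two appearances of $T^1_C$ as the same abstract first-order deformation space of $C$ and verifying the necessary $H^1$-vanishings. The striking cancellation of $h^0(\Theta_C)$ means the delicate explicit computation of this invariant for the singular, non-reduced curve $C$ is not needed for the final dimension count.
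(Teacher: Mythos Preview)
Your argument is correct and takes a genuinely different route from the paper. The paper carries out an explicit linear-algebra computation: it writes down the $12$-parameter family $\widetilde C_{\mathbf a}$ of embedded deformations of $C\subset\bbP^3$, the $16$-parameter family $\tilde i_{\mathbf a,\mathbf b}$ of lifts of $i$, and a $16$-parameter family $\tilde\sigma_{\mathbf s}$ of coordinate changes on the ambient $\bbP^3_{k[\e]}$; it then computes by hand that exactly $12$ linear combinations of the $a_i,b_j$ are invariant under the $\tilde\sigma_{\mathbf s}$-action, giving $\dim T_{(C,i)}\CM\le 12$, and concludes equality via the lower bound from Corollary~\ref{cor:irred_12}. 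Your approach is cohomological: you observe that the same forget-the-map exact sequence applies to $(C,i)$ and to the auxiliary pair $(C,j)$ (for which $T_{(C,j)}\CM=H^0(N_{C/\bbP^3})$), and eliminating $T^1_C$ between them gives the closed formula $\dim T_{(C,i)}\CM=h^0(i^*T_{\bbP^3})+h^0(N_{C/\bbP^3})-h^0(T_{\bbP^3}|_C)$. The Euler sequence and Lemma~\ref{lm:twisted_cubic_emb} evaluate the outer terms as $15$, and Piene--Schlessinger gives $h^0(N_{C/\bbP^3})=12$ since $j(C)$ is a Cohen--Macaulay curve and hence a smooth point of $\Hilb_{\bbP^3}^{3t+1}$ lying only on $H_0$. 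What your approach buys is a conceptual, coordinate-free argument that yields $12$ on the nose (no separate lower bound needed) and cleanly sidesteps computing $h^0(\Theta_C)$ for the non-reduced curve; what the paper's approach buys is concreteness (one sees the actual tangent directions) and self-containment, since the value $h^0(N_{C/\bbP^3})=12$ is recomputed directly rather than quoted. Your caveats are well placed: the injectivity of $H^0(\Theta_C)\to H^0(i^*T_{\bbP^3})$ indeed needs \cite[Theorem~2.19]{CM} over $k[\e]$ (which holds since $\CM$ is an algebraic space), and surjectivity onto $T^1_C$ uses that every abstract deformation $\widetilde C$ admits a lift of $i$ (resp.\ $j$), the obstruction lying in $H^1(i^*T_{\bbP^3})=0$; you should also note that any such lift $\tilde i$ is automatically finite (proper with finite fibers), so $(\widetilde C,\tilde i)\in\CM(\spec(k[\e]))$.
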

\begin{proof}
  Without loss of generality, we can assume that the curve $C$ is
  given by the ideal $I=(xu,yu-x^2,u^2)$ in
  $\bbP^3_k=\Proj(k[x,y,w,u])$ and that $i$ corresponds to the the
  homomorphism of graded rings
  \[\func{\varphi}{k[x,y,z,w]}{k[x,y,w,u]/I}\] with
  $\varphi(x)=x$, $\varphi(y)=y$, $\varphi(z)=0$ and
  $\varphi(w)=w$. 

  As in Proposition~\ref{prop:deformation_subscheme}, we study first
  the deformations of $C$ as a subscheme of the projective space
  $\bbP^3_k$. These deformations are in one-to-one correspondence with
  the elements of $H^0(C,\calN_{C/\bbP^3_k})$, see for example
  \cite[Theorem~2.4]{Hartshorne:Def}, and we can compute them from the
  exact sequence \[\xymatrix{0\ar[r] & \calN_{C/\bbP^3_k} \ar[r] &
    \calO_C(2)^{\oplus 3}\ar[r] & \calO_C(3)^{\oplus 2}}\] induced by
  a resolution of the ideal $I$. It follows that the space of
  deformations has dimension $12$, and for every
  $\mathbf{a}=(a_1,\ldots,a_{12})\in k^{12}$ we get a deformation
  $\widetilde C_{\mathbf{a}}\subset\bbP^3_{k[\e]}$ defined by the
  ideal $\tilde I_{\mathbf{a}}$ generated by the
  polynomials \begin{align*} p_{1,\mathbf{a}}(x,y,w,u) =&xu +
    \e(a_1x^2 + a_2xy + a_3xw + a_4y^2 + a_5yw+a_6wu), \\
    p_{2,\mathbf{a}}(x,y,w,u) = &yu-x^2 +
    \\& +\e (a_7x^2 + a_8xy + a_9xw + a_{10}y^2 + a_{11}yw + a_{12}wu), \\
    p_{3,\mathbf{a}}(x,y,w,u) = &u^2 + \e ((a_2+a_{10})x^2 + a_4xy +
    a_5xw + (a_3+a_{11})wu).
  \end{align*}
  Every deformation of $(C,i)$ is then given by a map $\tilde
  i_{\mathbf{a},\mathbf{b}}$ associated
  to \[\func{\tilde\varphi_{\mathbf{a},\mathbf{b}}}
  {k[\e][x,y,z,w]}{k[\e][x,y,w,u]/\tilde I_{\mathbf{a}}}\] defined by
  \begin{align*}
   \tilde \varphi_{\mathbf{a},\mathbf{b}}(x) & = x+
    \e(b_1x+b_2y+b_3w+b_4u)\\ \varphi_{\mathbf{a},\mathbf{b}}'(y) &
    =y+\e(b_5x+b_6y+b_7w+b_8u) \\
    \tilde \varphi_{\mathbf{a},\mathbf{b}}(z) & =0+
    \e(b_9x+b_{10}y+b_{11}w+b_{12}u)
    \\\tilde\varphi_{\mathbf{a},\mathbf{b}}(w) & =w+
    \e(b_{13}x+b_{14}y+b_{15}w+b_{16}u)
  \end{align*}
  for $\mathbf{b}=(b_1,\ldots,b_{16})\in k^{16}$.  Thus the pairs
  $(\widetilde C_{\mathbf{a}}, \tilde i_{\mathbf{a},\mathbf{b}})$ give
  all deformations, and the dimension of the tangent space at the
  point $(C,i)$ is at most $12+16=28$.

  Recall that in $\CM$ we only consider isomorphism classes of
  pairs. In particular, we have $(\widetilde C_{\mathbf{a}},\tilde
  i_{\mathbf{a},\mathbf{b}}) = (\tilde C_{\mathbf{a}'},\tilde
  i_{\mathbf{a}',\mathbf{b}'})$ in $\mathit{CM}(\spec(k[\e]))$ for
  $\mathbf{a}, \mathbf{a}'\in k^{12}$ and $\mathbf{b},\mathbf{b}'\in
  k^{16}$ if and only if there exists an isomorphism
  $\iso{\tilde\alpha}{\widetilde C_{\mathbf{a}}}{\widetilde
    C_{\mathbf{a}'}}$ such that $\tilde
  i_{\mathbf{a},\mathbf{b}}=\tilde i_{\mathbf{a}',\mathbf{b}'}\circ
  \tilde \alpha$. As the restriction $\alpha$ of $\tilde\alpha$ to
  $k=k[\e]/(\e)$ is an automorphism of $C$ such that $i=i\circ
  \alpha$, it follows from \cite[Theorem~2.19]{CM} that $\alpha$ is
  the identity morphism.

  We consider the particular case that $\tilde\alpha$ is induced by a
  homomorphism of graded rings
  $\func{\tilde\sigma_{\mathbf{s}}}{k[\e][x,y,w,u]} k[\e][x,y,w,u]$
  with
  \begin{align*}
    \tilde \sigma_{\mathbf{s}}(x) & =x+\e(s_1x+s_2y+s_3w+s_4u) \\
    \tilde\sigma_{\mathbf{s}}(y) & =y+\e(s_5x+s_6y+s_7w+s_8u) \\
    \tilde\sigma_{\mathbf{s}}(w) & =w+\e(s_9x+s_{10}y+s_{11}w+s_{12}u) \\
    \tilde\sigma_{\mathbf{s}}(u) &
    =u+\e(s_{13}x+s_{14}y+s_{15}w+s_{16}u).
  \end{align*}
  for $\mathbf{s}=(s_1,\ldots,s_{16})\in k^{16}$. Then one can compute
  that $\tilde\sigma_{\mathbf{s}}(C_{\mathbf{a}})=C_{\mathbf{a}'}$ and
  $\tilde\sigma_{\mathbf{s}}\circ
  \tilde\varphi_{\mathbf{a},\mathbf{b}} =
  \tilde\varphi_{\mathbf{a}',\mathbf{b}'}$ for some
  $\mathbf{s}=(s_1,\ldots,s_{16})\in k^{16}$ if and only if the
  following $12$ conditions hold:
  \begin{gather*}
    a_2-a_{10} = a_{2}'-a_{10}', \qquad a_3-a_{11} =a_{3}'-a_{11}',
    \qquad a_4=a_4', \qquad a_5 = a_5', \\ b_2+\frac 1 3(a_8-a_1)
    =b_2'+\frac 1 3(a_8'-a_1'), \qquad b_3+\frac 1 2 a_9 = b_3'+\frac
    1 2 a_9', \\ b_4-a_6 = b_4'-a_6',\qquad b_7-a_{12}=b_{7}'-a_{12}' \\
    b_9=b_9',\qquad b_{10} =b_{10}',\qquad b_{11} = b_{11}', \qquad
    b_{12}= b_{12}'.
  \end{gather*}
  So the equivalence class of the element $(\widetilde C_{\mathbf{a}},
  \tilde i_{\mathbf{a}, \mathbf{b}})$ in $\CM(\spec(k[\e]))$ depends
  only on the values of $a_2-a_{10}$, $a_3-a_{11}$, $a_4$, $a_5$,
  $b_2+\frac 1 3(a_8-a_1)$, $b_3+\frac 1 2 a_9$, $b_4-a_6$,
  $b_7-a_{12}$, $b_9$, $b_{10}$, $b_{11}$, $b_{12}$. 

  It follows that the dimension of the space of first-order
  deformations of the point $(C,i)$ is at most $12$. Since, by
  Corollary~\ref{cor:irred_12}, the space $\CM$ is has dimension $12$,
  this concludes the proof.
\end{proof}

\begin{thm}
  The Cohen--Macaulay space $\CM$ of twisted cubics is irreducible,
  smooth and it has dimension $12$.
\end{thm}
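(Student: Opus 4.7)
The plan is to combine the dimension and irreducibility result of Corollary~\ref{cor:irred_12} with the tangent-space computation of Proposition~\ref{prop:triple_smooth}, propagating smoothness from the triple-line locus to all of $\CM$ via upper semi-continuity. Since $\CM$ is an irreducible algebraic space of dimension $12$, smoothness at a closed point is equivalent to the tangent space having dimension exactly $12$; and since the inequality $\dim_k T_{(C,i)}\CM \geq \dim \CM = 12$ is automatic at every closed point (the embedding dimension of a Noetherian local ring dominates its Krull dimension), the only substantive task is to produce a matching upper bound of $12$ at every point.

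For this upper bound I would use the specialization diagram of Subsection~\ref{subsec:specializations}: every point $(C,i)$ of $\CM$ admits a degeneration to a point whose scheme-theoretic image is a plane triple line, so that this triple-line point lies in the closure of $(C,i)$ inside $\CM$. Since the function $(C,i)\mapsto \dim_k T_{(C,i)}\CM$ is upper semi-continuous on $\CM$, its value at any point is bounded above by its value at any specialization, in particular by its value at a triple-line point. Proposition~\ref{prop:triple_smooth} says this latter value equals $12$, giving the desired inequality $\dim_k T_{(C,i)}\CM \leq 12$.

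Combining the two bounds forces $\dim_k T_{(C,i)}\CM = 12$ at every closed point, so $\CM$ is smooth at every closed point, hence smooth everywhere; together with Corollary~\ref{cor:irred_12} this gives the theorem. The main technical subtlety, rather than a genuine obstacle, is justifying upper semi-continuity of the tangent dimension for the algebraic space $\CM$ (rather than only for schemes); this reduces to the scheme case by choosing an \'etale atlas and using that tangent spaces are preserved under \'etale base change, together with the fact that $\CM$ is a locally Noetherian algebraic space by \cite{CM}.
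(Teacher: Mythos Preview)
Your argument is correct and follows essentially the same route as the paper's proof: both combine Corollary~\ref{cor:irred_12} with Proposition~\ref{prop:triple_smooth}, using the specialization diagram of Subsection~\ref{subsec:specializations} to reduce the tangent-space check to the triple-line point. The only cosmetic difference is that the paper first invokes smoothness of the open locus $U$ (already known from the comparison with $H_0$) and applies the specialization argument only to the complement, whereas you treat all points uniformly via upper semi-continuity; your version is slightly more explicit about why checking the most degenerate point suffices.
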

\begin{proof}
  We only have to show that $\CM$ is smooth, that is, the tangent
  space has dimension $12$ at every point. We have seen in
  Corollary~\ref{cor:irred_12}, that the open subscheme $U$ of $\CM$,
  consisting of all points $(C,i)$ where $i$ is a closed immersion,
  is smooth.  Hence it remains to consider the most specialized ones
  among the remaining points, namely those having a triple line as the
  scheme-theoretic image. This case was treated in
  Proposition~\ref{prop:triple_smooth}.
\end{proof}
\providecommand{\bysame}{\leavevmode\hbox to3em{\hrulefill}\thinspace}
\providecommand{\MR}{\relax\ifhmode\unskip\space\fi MR }
\providecommand{\MRhref}[2]{%
  \href{http://www.ams.org/mathscinet-getitem?mr=#1}{#2}
}
\providecommand{\href}[2]{#2}

\end{document}